\newcommand{\optf}{f^\star}
\newcommand{\bra}[1]{\langle #1 |}
\newcommand{\ket}[1]{| #1 \rangle}
\newcommand{\braket}[2]{\langle #1 | #2 \rangle}
\newtheorem{theorem}{Theorem}[section]
\newtheorem{lemma}[theorem]{Lemma}
\newtheorem{proposition}[theorem]{Proposition}
\newtheorem{definition}[theorem]{Definition}
\newtheorem{remark}[theorem]{Remark}
\newcommand{\R}{{\mathbb R}}
\title{On classical inequalities for autocorrelations and autoconvolutions
%The inequality Jose and João proposed has extremizers
}
\author{Jaume de Dios Pont}
\address[JDP]{Department of  Mathematics,  University  of  California  Los  Angeles,  Portola Plaza 520, Los  Angeles,
CA 90095, USA}
\email{jdedios@math.ucla.edu}
\author[Jos\'e Madrid]{Jos\'e Madrid}
\address[JM]{Department of  Mathematics,  University  of  California  Los  Angeles,  Portola Plaza 520, Los  Angeles,
CA 90095, USA}
\email{jmadrid@math.ucla.edu}
\date{\today}
\newtheorem{thm}{Theorem}
\newtheorem{prop}{Proposition} 
\keywords{autocorrelation, autoconvolution, optimal constants, extremizers}
\subjclass[2010]{42A05, 42A85, 70H03, 39A12}
\begin{document}
	\maketitle
	
\begin{abstract}
In this paper we study an autocorrelation inequality proposed by Barnard and Steinerberger \cite{BarnardSteinerberger}. The study of these problems is motivated by a classical problem in additive combinatorics. We establish the existence of extremizers to this inequality, for a general class of weights, including Gaussian functions (as studied by the second author and Ramos) and characteristic function (as originally studied by Barnard and Steinerberger). Moreover, via a discretization argument and numerical analysis, we find some almost optimal approximation for the best constant allowed in this inequality. We also discuss some other related problem about autoconvolutions.

 .
\end{abstract}

\section{Introduction}
%!TEX root = main.tex

Motivated by an old problem in additive combinatorics about estimating the size of Sidon sets \cite{CRV}, many authors have studied the problem of finding the best constant $c_{\max}$ such that for any function $f\in L^{1}(\R)$ supported in $[-1/4,1/4]$ the following inequality holds
$$
\text{max}_{-1/2 \le t \le 1/2} \int_{\R} f(t-x)f(x) \, dx \ge c_{\max} \left(\int_{-1/4}^{1/4} f(x) \, dx\right)^2.
$$
The best known result so far was obtained by Cloninger and Steinerberger in \cite{CS}, they proved that $c \ge 1,28.$ Many other lower bounds were previously obtained in \cite{CRT,G,MOB1,Y,MOB2,MV}.
Inspired by this question, two other related problems were proposed and studied by Barnard and Steinerberger in \cite{BarnardSteinerberger}. One of their results was the following: The inequality
\begin{equation}\label{eq:l1l2}
\int_{-1/2}^{1/2}\int_{\R}f(x)f(x+t) dx dt\leq 0.91\|f\|_1\|f\|_2,
\end{equation}
holds for any function $f \in L^1(\R) \cap L^2(\R)$. It was also established in \cite{BarnardSteinerberger} via an example, that the best constant such that \eqref{eq:l1l2} holds is at least 0.8.
The upper bound was recently improved by the second author
and Ramos in \cite{MR}, where they proved that this inequality still hold true when we write 0.865 instead of 0.91. A natural question is, what happen when we consider a different probability space, in particular, what happen when we consider Gaussian means. This question was also addressed in \cite{MR}:
\begin{proposition}[\cite{MR}, Theorem 1.2]\label{Thm JJ}
Let $a$ be a positive real number. For any $f\in L^{1}(\R)\cap L^{2}(\R)$. The following inequality holds
\begin{align}\label{MR thm 1}
\left(\frac{a}{\pi}\right)^{1/2}\int_{\R}\int_{\R}f(x)f(x+t)e^{-a t^{2}}dxdt\leq \left(\frac{8a}{27\pi}\right)^{1/4}\|f\|_1\|f\|_2,
\end{align}
and $\left(\frac{8a}{27\pi}\right)^{1/4}$ can not be replaced by $\left(\frac{a}{4\pi}\right)^{1/4}$. 
\end{proposition}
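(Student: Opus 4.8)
The plan is to strip the parameter $a$ by scaling, reduce to nonnegative symmetric‑decreasing $f$, pass to the autocorrelation and its Fourier transform to write $I(f)$ as a positive quadratic form, prove the inequality there, and settle the sharpness by perturbing Gaussians.

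\textit{Reductions and set‑up.} A parabolic rescaling $f(x)\mapsto f(x/\sqrt a)$ multiplies both sides of \eqref{MR thm 1} by the same power of $a$, so it suffices to treat $a=1$; write $I(f):=\int_{\R}\int_{\R}f(x)f(x+t)e^{-t^{2}}\,dx\,dt$, so the claims become $I(f)\le(8\pi/27)^{1/4}\|f\|_1\|f\|_2$ together with the fact that this fails with $(\pi/4)^{1/4}$ in place of $(8\pi/27)^{1/4}$. Replacing $f$ by $|f|$ only increases $I(f)$ and preserves $\|f\|_1,\|f\|_2$, so we may take $f\ge 0$, and since $e^{-t^{2}}$ is symmetric decreasing, Riesz's rearrangement inequality lets us further assume $f=f^{\ast}$. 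Put $g(t)=\int_{\R}f(x)f(x+t)\,dx$; then $g\ge 0$ is even and positive definite with $g(0)=\|f\|_2^{2}=\max g$, $\int_{\R}g=\|f\|_1^{2}$ and $I(f)=\int_{\R}g(t)e^{-t^{2}}\,dt$. With the convention $\widehat\phi(\xi)=\int\phi(x)e^{-ix\xi}\,dx$ (so $\widehat{e^{-t^{2}}}(\xi)=\sqrt\pi\,e^{-\xi^{2}/4}$) this becomes
\[
I(f)=\frac{1}{2\sqrt\pi}\int_{\R}|\widehat f(\xi)|^{2}e^{-\xi^{2}/4}\,d\xi,
\]
while $\widehat g=|\widehat f|^{2}\ge 0$, $|\widehat f(\xi)|\le\widehat f(0)=\|f\|_1$, and $\tfrac1{2\pi}\int|\widehat f|^{2}=\|f\|_2^{2}$; equivalently $I(f)=\|f\ast H\|_2^{2}$ with $H(x)=(4/\pi)^{1/4}e^{-2x^{2}}$, the convolution square root of $e^{-t^{2}}$.

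\textit{What is easy, and what is the hard part.} A single Cauchy--Schwarz is already enough for \emph{some} constant: $I(f)=\langle f,f\ast e^{-t^{2}}\rangle\le\|f\|_1\,\|f\ast e^{-t^{2}}\|_\infty\le\|f\|_1\|f\|_2\,\|e^{-t^{2}}\|_2=(\pi/2)^{1/4}\|f\|_1\|f\|_2$, i.e.\ the inequality with the too‑large constant $(\pi/2)^{1/4}\approx1.12$; likewise the two ``bathtub'' bounds that use only $0\le g\le g(0)$, $\int g=\|f\|_1^{2}$ (and the dual statement for $\widehat g$) give only $\approx1.05$. Reaching $(8\pi/27)^{1/4}\approx0.98$ is the crux. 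The plan is to exploit $|\widehat f|\le\|f\|_1$ and $\int|\widehat f|^{2}=2\pi\|f\|_2^{2}$ simultaneously via a threefold split $|\widehat f|^{2}=|\widehat f|^{2/3}\cdot|\widehat f|^{4/3}$ — bounding the first factor by $\|f\|_1^{2/3}$ and applying Hölder (exponents $3$ and $3/2$), or a three‑term AM--GM, to the remainder against $e^{-\xi^{2}/4}$; the number $8/27=(2/3)^{3}$ and the relevant Gaussian moments (e.g.\ $\int e^{-3\xi^{2}/4}$) enter exactly here — and then to feed back the positive definiteness of $g$ (equivalently the identity $I(f)=\|f\ast H\|_2^{2}$ and a second Cauchy--Schwarz) to restore the correct homogeneity. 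The obstacle I expect: a bare threefold Hölder yields the scaling $\|f\|_1^{4/3}\|f\|_2^{2/3}$ rather than $\|f\|_1\|f\|_2$, and since \eqref{MR thm 1} is \emph{not} dilation invariant at fixed $a$ this discrepancy cannot be rescaled away — so the bookkeeping that turns it into $\|f\|_1\|f\|_2$ with the precise constant $(8\pi/27)^{1/4}$ is where the real work lies. A robust alternative, in the spirit of the numerical part of this paper, is to discretise: bound $I(f)$ by a finite quadratic program over nonnegative positive‑definite sequences with prescribed value at $0$ and prescribed sum, and control its optimum by semidefinite duality.

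\textit{Sharpness.} On Gaussians $f_\gamma(x)=e^{-\gamma x^{2}}$ one computes $g(t)=\sqrt{\pi/(2\gamma)}\,e^{-\gamma t^{2}/2}$, hence $I(f_\gamma)=\pi/\sqrt{\gamma(\gamma+2)}$ and $\|f_\gamma\|_1\|f_\gamma\|_2=\pi^{3/4}2^{-1/4}\gamma^{-3/4}$, so
\[
\frac{I(f_\gamma)}{\|f_\gamma\|_1\|f_\gamma\|_2}=\pi^{1/4}2^{1/4}\,\frac{\gamma^{1/4}}{(\gamma+2)^{1/2}},
\]
which is maximised at $\gamma=2$ with value $(\pi/4)^{1/4}$ (for general $a$ the optimal Gaussian is $e^{-2ax^{2}}$, with value $(a/4\pi)^{1/4}$ in the normalisation of \eqref{MR thm 1}). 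Thus the best constant is at least $(\pi/4)^{1/4}$; it is \emph{strictly} larger — which is what ``cannot be replaced by $(a/4\pi)^{1/4}$'' asserts — because $f_{2}=e^{-2x^{2}}$ is not a critical point of $f\mapsto I(f)/(\|f\|_1\|f\|_2)$. Indeed, a first‑variation computation shows that at any everywhere‑positive critical point one would have $(f\ast e^{-t^{2}})(x)=\alpha+\beta f(x)$ for constants $\alpha,\beta>0$, whereas $(f_{2}\ast e^{-t^{2}})(x)=\sqrt{\pi/3}\,e^{-2x^{2}/3}$ is a Gaussian of a \emph{different} width, not an affine function of $f_{2}$; hence the Euler--Lagrange defect $(f_{2}\ast e^{-t^{2}})(x)-\alpha-\beta f_{2}(x)$ is positive for some $x\neq 0$ (a short evaluation, e.g.\ near $x=\pm\tfrac12$, confirms this). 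Adding to $f_{2}$ a small nonnegative bump supported there therefore strictly increases the ratio above $(\pi/4)^{1/4}$, so \eqref{MR thm 1} indeed fails with $(\pi/4)^{1/4}$ — equivalently $(a/4\pi)^{1/4}$ for general $a$ — in place of the stated constant.
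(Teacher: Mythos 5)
The paper does not prove this statement; it is cited verbatim as \cite[Theorem 1.2]{MR}. So there is no in-paper argument for you to match — but your proposal itself needs to be judged on its own merits, and it is incomplete on the core estimate.

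Your reductions (scaling to $a=1$, passing to $f=f^*\ge0$, the Fourier/Plancherel identity $I(f)=\frac{1}{2\sqrt\pi}\int|\widehat f|^2 e^{-\xi^2/4}\,d\xi$, and the identity $I(f)=\|f\ast H\|_2^2$ with $H=(4/\pi)^{1/4}e^{-2x^2}$) are all correct, and the sharpness discussion is sound: the Gaussian computation giving the ratio $(\pi/4)^{1/4}$ at $\gamma=2$ is right, and so is the Euler--Lagrange observation that $(f_2\ast e^{-t^2})(x)=\sqrt{\pi/3}\,e^{-2x^2/3}$ cannot equal $\alpha+\beta e^{-2x^2}$ with $\alpha>0$ (compare $x\to\infty$), so $f_2$ is not a critical point and the best constant strictly exceeds $(a/4\pi)^{1/4}$.

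The genuine gap is the upper bound $I(f)\le(8\pi/27)^{1/4}\|f\|_1\|f\|_2$, which you yourself flag as unresolved. Your Fourier-side threefold split $|\widehat f|^2=|\widehat f|^{2/3}|\widehat f|^{4/3}$ plus H\"older produces $\|f\|_1^{2/3}\|f\|_2^{4/3}$, and you correctly observe that this homogeneity mismatch cannot be rescaled away; but you do not repair it, and the ``robust alternative'' (discretise and semidefinite duality) would not yield the exact constant. The missing idea is to work on the physical side with your own identity $I(f)=\|f\ast H\|_2^2$ and invoke the \emph{sharp} Young convolution inequality of Beckner/Brascamp--Lieb at the diagonal exponents $p=q=4/3$, $r=2$:
\[
\|f\ast H\|_2 \le \frac{2}{3^{3/4}}\,\|f\|_{4/3}\,\|H\|_{4/3},
\]
together with the elementary interpolation $\|f\|_{4/3}^2\le\|f\|_1\|f\|_2$. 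Squaring, and using $\|H\|_{4/3}^2=(4/\pi)^{1/2}(3\pi/8)^{3/4}$, one gets exactly
\[
I(f)\le \frac{4}{3^{3/2}}\,(4/\pi)^{1/2}(3\pi/8)^{3/4}\,\|f\|_1\|f\|_2=\Bigl(\tfrac{8\pi}{27}\Bigr)^{1/4}\|f\|_1\|f\|_2 .
\]
Note that the standard (constant~$1$) Young inequality at these exponents gives only $\approx 1.28$, so the Beckner constant $2/3^{3/4}$ is not a refinement but the essential ingredient; this is where the numbers $8/27=(2/3)^3$ and $3^{3/4}$ actually come from, not from a three-term AM--GM on $|\widehat f|$. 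Without this input your proposal does not establish the inequality.
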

%In particular, if $a=2\pi$ the upper bound is 0.8773 and the lower bound 0.8408.$.

	%{\it{Structure of the paper:}} 
	In this paper, using Euler-Lagrange equations, in Section 2 we establish the existence of extremizers for \eqref{eq:l1l2}, \eqref{MR thm 1} and a more general class of weights (nonnegative functions $w:\mathbb{R}\to[0,+\infty)$). In Section 3, via a discretization argument, we find an almost optimal numerical approximation for the best constants allowed in these inequalities, see Table 1. Finally, in Section 4, we discuss a related problem about autoconvolution.\\ %Finally, in the appendix, we explain how some extra regularity assumptions on the weight (for instance when dealing with Gaussian functions) allow us to obtain even better approximations (as expected).\\
	
	Related results about autoconvolution inequalities were recently obtained in \cite{CJLL}. We hope variations of the methods outlined in this paper can be applied to such, and other, related problems. As another example,  a classical problem proposed by Erdos, the Minimum overlaping problem, can be reformulated in terms of autorrelations as observed by Haugland in \cite{Ha}, we believe that there is a strong relation between this problem and the one analyzed in this manuscript.

\newcommand{\gr}[1]{{\color{red}#1}}
\begin{table}
\renewcommand{\arraystretch}{1.5}
	\begin{tabular}{ccccc}
		\textbf{Weight} & \multicolumn{3}{c}{\textbf{Spectral Method}} & \textbf{Fixed Point}  \\
			 					& {\small Lower bound} & {\small Upper bound}  & {\small Difference} 	 	& {\small Lower bound} 

		\\
		\hline

		$\chi_{[-1/2,1/2]}$ & $0.80558\gr{09}$ & $0.80558\gr{96}$ & $<9\cdot 10^{-6}$ & $0.80558\gr{09}$\\
		$\exp(-\pi x^2)$    & $0.7152\gr{474}$ & $0.7152\gr{576}$ & $<1.2\cdot 10^{-5}$ & $0.7152\gr{475}$
	\end{tabular}

	\caption{Upper and Lower bounds for the average problem. These bounds have been found with the algorithm described in Section 3.3, with a value of $\delta \approx 1.45\cdot 10^{-3}$ and $\Delta \lambda \approx 0.001$. The implementation can be found at \cite{Github}.}
	\label{table}
\end{table}

\begin{figure}
    \centering
    \includegraphics[width=.9\textwidth]{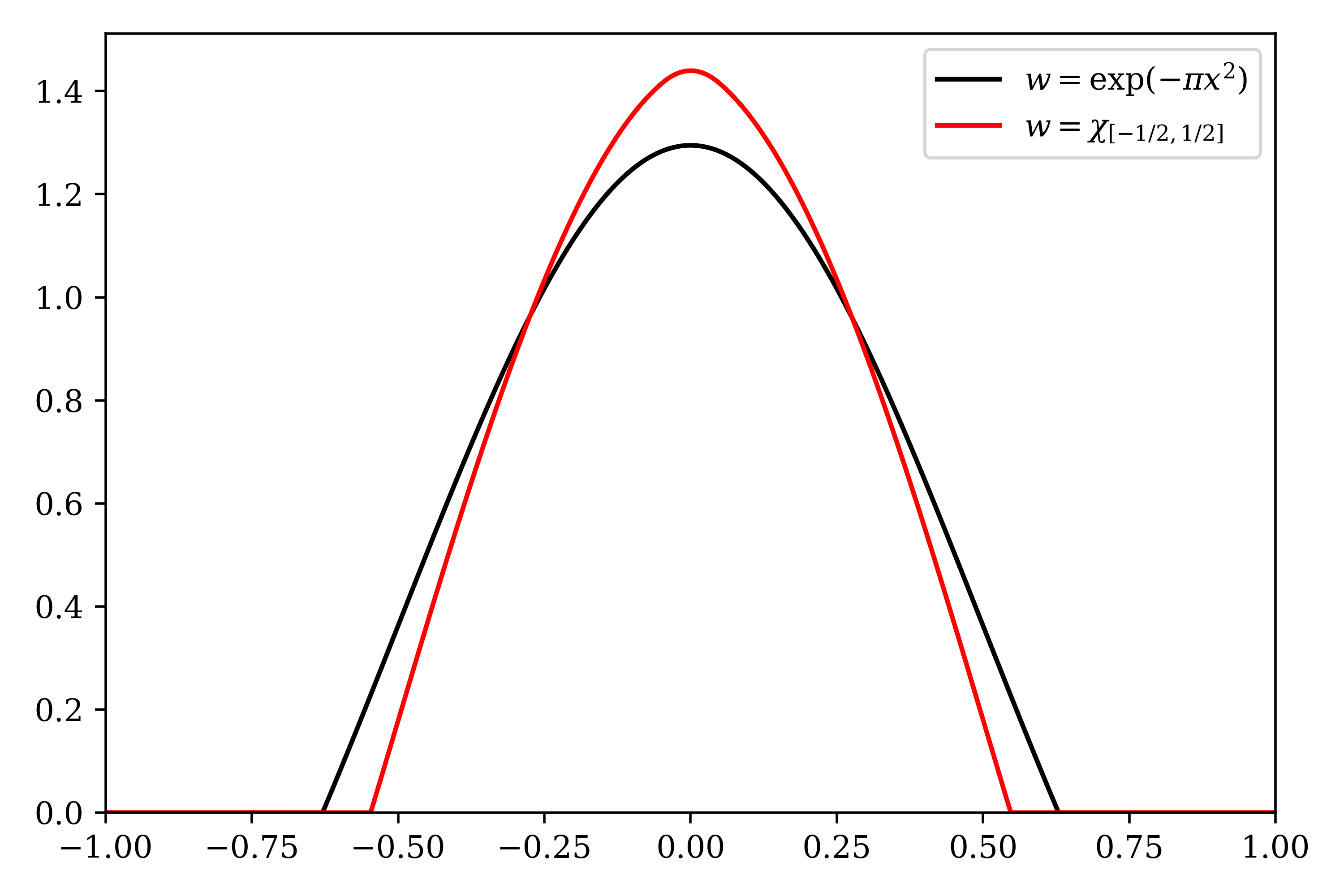}
    \caption{Numerical extremizers for the discretized problem. Extremizers are normalized so that $\|f^*\|_1\|f^*\|_2 = 1$. }
    \label{fig:my_label}
\end{figure}

\section{The extremizer exists and is compactly supported}
%!TEX root = main.tex
	
	Let $w(x) \in L^1(\mathbb{R})\cap L^{\infty}(\mathbb{R})$ be a symmetric decreasing weight with $\|w\|_{L^{1}(\mathbb R)} = \|w\|_{L^{\infty}(\mathbb R)} =1$. Examples to keep in mind are $w(x) = \chi_{[-1/2,1/2]}$, or $w(x) = e^{-\pi x^2}$. Let $C_{opt}$ be the smallest constant such that the inequality 
	\begin{equation}\label{main ineq}
			\int_{\mathbb R^2} f(x)f(y) w(x-y) dx dy \le C_{opt}(w) \|f\|_1\|f\|_2
		\end{equation}
	holds for all $f\in L^{1}(\R)\cap L^{2}(\R)$. After a change of variable, finding $C_{opt}$ for  $\frac{a^{1/2}}{\pi^{1/2}}e^{-a x^2}$ is equivalent to find the optimal constant for which \eqref{MR thm 1} holds. In particular, for $a=\pi$, \eqref{MR thm 1} is equivalent to $0.707107\approx  \frac{1}{2^{1/2}}\le C_{opt}(e^{-\pi x^2}) \le \left(\frac{2}{3}\right)^{3/4}\approx 0.737788$ (compare to Table 1).

Our first theorem establishes the existence of extremizers with compact support for the inequality \eqref{main ineq}, as conjectured by the second author and Ramos (see \cite{MR}, Conjecture 1.4). 

	\begin{thm}
	\label{thm:main}
		Let $w(x)$ be a symmetric decreasing weight in $L^1(\mathbb{R})$.  Then there exists a bounded, symmetrically decreasing function $\optf \in L^{1}(\R)\cap L^{2}(\R)$ (with $\optf$ depending on $w$), with compact support, such that
		\begin{equation}
			\int_{\mathbb R^2} f(x)f(y) w(x-y) dx dy = C_{opt}\|f\|_1\|f\|_2.
		\end{equation}
		
		%Moreover, the support of all extremizers has measure at most $l \le 4 \frac  {\|w\|_1^2 }{C_{opt,R}}^2.$

	%	Moreover there exists a compactly supported bounded symmetric decreasing extremizer. (I don't know if all of them are of this form, but I'd hope so)
	\end{thm}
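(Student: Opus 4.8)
My plan is to construct $\optf$ by the direct method in the calculus of variations and then exploit the Euler--Lagrange equation to see that it is bounded and compactly supported. Write $\Phi(f)=\big(\int_{\R^2}f(x)f(y)w(x-y)\,dx\,dy\big)/(\|f\|_1\|f\|_2)$, so $C_{opt}=\sup_f\Phi(f)$; here $0<C_{opt}\le1$, positivity coming from testing a narrow characteristic function and $\Phi\le1$ from the elementary estimate $\int_{\R^2}ffw\le\min(\|w\|_\infty\|f\|_1^2,\ \|w\|_1\|f\|_2^2)$ (the second bound by Young's inequality). Since $w\ge0$, replacing $f$ by $|f|$ does not decrease $\Phi$; and since $w=w^{\ast}$, the Riesz rearrangement inequality gives $\Phi(f)\le\Phi(f^{\ast})$. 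I may therefore pick a maximizing sequence $\{f_n\}$ of nonnegative symmetric decreasing functions, normalized (by $0$-homogeneity of $\Phi$) so that $\|f_n\|_2=1$. Dividing $\int_{\R^2}f_nf_nw\le\|f_n\|_1^2$ and $\int_{\R^2}f_nf_nw\le1$ by $\|f_n\|_1$ and letting $n\to\infty$ shows $\|f_n\|_1\in[c,C]$ for large $n$ with $0<c\le C<\infty$; passing to a subsequence, $\|f_n\|_1\to L\in[c,C]$. A symmetric decreasing function of unit $L^2$ norm satisfies $f_n(x)\le(2|x|)^{-1/2}$, so $\{f_n\}$ is uniformly bounded on $\{|x|\ge\varepsilon\}$ for every $\varepsilon>0$; Helly's selection theorem and a diagonal argument then let me assume $f_n\to f$ pointwise on $\R\setminus\{0\}$, with $f$ symmetric decreasing.

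The heart of the argument — and what I expect to be the main obstacle — is to rule out loss of compactness of $\{f_n\}$. By the structure of the functional this reduces to a single statement: no $L^2$ mass concentrates at the origin, i.e.
\[
\sigma:=\lim_{r\to0^{+}}\ \limsup_{n\to\infty}\int_{|x|<r}f_n(x)^2\,dx=0.
\]
I would prove this by contradiction. Fix $r>0$ and split $f_n=g_n+h_n$ with $g_n=f_n\chi_{\{|x|\le r\}}$ and $h_n=f_n\chi_{\{|x|>r\}}$. By Cauchy--Schwarz $\|g_n\|_1\le(2r)^{1/2}\|g_n\|_2\le(2r)^{1/2}$, uniformly small; hence, using $w\le1$, both $\int_{\R^2}g_ng_nw\le\|g_n\|_1^2$ and $\int_{\R^2}g_nh_nw\le\|g_n\|_1\|h_n\|_1$ are $O(r^{1/2})$, while $\int_{\R^2}h_nh_nw\le C_{opt}\|h_n\|_1\|h_n\|_2$ by \eqref{main ineq} applied to $h_n\in L^1\cap L^2$. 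Since $\|h_n\|_1\le\|f_n\|_1$ and $\|h_n\|_2^2=1-\|g_n\|_2^2$, expanding $\int_{\R^2}f_nf_nw$ and letting $n\to\infty$ along a subsequence realizing $\limsup_n\|g_n\|_2^2$ (which is at least $\sigma$) gives $C_{opt}L\le C_{opt}L(1-\sigma)^{1/2}+O(r^{1/2})$; now $r\to0$ forces $\sigma=0$, since $C_{opt}L>0$. Morally: a concentrating bump has negligible $L^1$ norm, so it contributes negligibly to the numerator but a definite amount to the denominator $\|f\|_1\|f\|_2$, which a maximizing sequence cannot afford. (Escape of $L^1$ mass to infinity need not be excluded by a separate argument; it is ruled out automatically below by $\Phi(f)\le C_{opt}$.)

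Granting $\sigma=0$, compactness follows quickly. The family $\{f_n^2\}$ is tight, since $\int_{|x|>R}f_n^2\le f_n(R)\,\|f_n\|_1\le C(2R)^{-1/2}\to0$ uniformly in $n$; and it is uniformly integrable, since $\{f_n^2>M\}\subseteq\{|x|<1/(2M)\}$ together with $\sigma=0$ gives $\sup_n\int_{\{f_n^2>M\}}f_n^2\to0$ as $M\to\infty$. By Vitali's convergence theorem and $f_n\to f$ a.e., $f_n\to f$ in $L^2(\R)$; in particular $\|f\|_2=1$, so $f\not\equiv0$, and $\|f\|_1\le L$ by Fatou. By Young's inequality $f_n\ast w\to f\ast w$ in $L^2$, so $\int_{\R^2}f_nf_nw=\langle f_n,f_n\ast w\rangle\to\langle f,f\ast w\rangle=\int_{\R^2}ffw$; but the left side also tends to $C_{opt}L$, hence $\int_{\R^2}ffw=C_{opt}L$, and comparison with $\Phi(f)\le C_{opt}$ forces $\|f\|_1=L$ and $\Phi(f)=C_{opt}$. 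Thus $\optf:=f$ is a symmetric decreasing extremizer.

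Finally I would use the Euler--Lagrange equation to obtain boundedness and compact support. With $N:=\int_{\R^2}\optf\optf w=C_{opt}L$, perturbing $\optf$ by $t\psi$ where $\psi$ is bounded, compactly supported, and supported in $\{\optf>\eta\}$, the stationarity of $t\mapsto\Phi(\optf+t\psi)$ at $t=0$ and then $\eta\downarrow0$ yield, a.e.\ on $\{\optf>0\}$,
\[
\optf(x)=\frac{2}{N}\,(\optf\ast w)(x)-\frac{1}{L}.
\]
Since $\optf,w\in L^2(\R)$, the convolution $\optf\ast w$ is continuous and vanishes at $\pm\infty$, so the right-hand side tends to $-1/L<0$ as $|x|\to\infty$; as $\{\optf>0\}$ is an interval, this is impossible unless the interval is bounded, i.e. $\optf$ has compact support. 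On that support $0\le\optf(x)\le\tfrac{2}{N}\|\optf\|_1\|w\|_\infty=2/C_{opt}$, so $\optf$ is bounded (and continuous in the interior of its support). The only delicate point in this last step is justifying the differentiation and fixing the admissible class of perturbations, which is routine once $\psi$ is confined to $\{\optf>\eta\}$.
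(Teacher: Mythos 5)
Your argument is correct, and it takes a genuinely different route from the paper's. You run the direct method on the unrestricted problem: a maximizing sequence of symmetric decreasing functions normalized in $L^2$, pointwise convergence via the decay bound $f_n(x)\le (2|x|)^{-1/2}$ and Helly's selection theorem, then a concentration--compactness argument (ingeniously using the inequality \eqref{main ineq} itself, applied to the truncation $h_n$) to rule out escape of $L^2$ mass to the origin, followed by Vitali to upgrade to $L^2$ convergence. You then read off compact support and boundedness qualitatively from the Euler--Lagrange equation, since $\optf\ast w$ is continuous and vanishes at infinity while the constant $-1/L$ is strictly negative. The paper instead delegates all of the compactness to the restricted problem on $[-R,R]$ (Proposition~\ref{restricted extremizers existence}, an adaptation of the extremizer-existence result in [MR, Theorem~1.6]), and derives a \emph{quantitative} a~priori bound $a\le 2\|w\|_1^2/C_{opt,R}^2$ on the support directly from integrating the Euler--Lagrange equation; compact support of the global extremizer then follows once one shows $C_{opt}=\lim_{R\to\infty}C_{opt,R}$. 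Your proof is more self-contained (it does not lean on the earlier [MR] result) and isolates the concentration phenomenon cleanly; the paper's proof is shorter given that citation and produces an explicit, computable support bound that is actually used later in the discretization estimates. Two minor points in your write-up: the ``narrow characteristic function'' heuristic for $C_{opt}>0$ is off (a narrow bump makes $\Phi$ small, not bounded below; positivity is nevertheless immediate from $w>0$ near the origin), and in the uniform-integrability step you should say a word about passing from $\limsup_n$ to $\sup_n$ (handled by the finitely many initial indices since each $f_n\in L^2$); neither affects correctness.
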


	We start observing that 
	\begin{align*}
	C_{opt}
	&=
		\sup_{\substack{f\in L^{1}(\R)\cap L^{2}(\R) \\ 
						\|f\|_1\|f\|_2\neq0}}
		\frac{\int_{\mathbb R^2} f(x)f(y)w(x-y) dx dy }
			 {\|f\|_1\|f\|_2}
	\\&=
		\sup_{\substack{f\in L^{1}(\R)\cap L^{2}(\R) \\ 
						\|f\|_1\|f\|_2\neq0 ,f\geq0}}
		\frac{\int_{\mathbb R^2} f(x)f(y)w(x-y) dx dy }
			 {\|f\|_1\|f\|_2}
	\\&= 	
		\sup_{\substack{f\in L^{1}(\R)\cap L^{2}(\R) \\ 
						\|f\|_1\|f\|_2\neq0 ,f\geq0\\ f \ \text{symmetric decreasing}}}
		\frac{\int_{\mathbb R^2} f(x)f(y)w(x-y) dx dy }
			 {\|f\|_1\|f\|_2}\\
	\end{align*}
	
The last identity is a consequence of the well known Riesz rearangement inequality:

	\begin{lemma}[Riesz rearrangement inequality]
		Let $f:\R\to\R$ be a nonnegrative function, let $f^*$ be the symmetric decreasing rearrangement of $f$. That is:
		$$f^*(x) := \int_{a\ge 0} \chi_{[-\mu(\{f\ge a\})/2, \mu(\{f\ge a\})/2]}(x) da $$ 
		where $\mu$ is the usual Lebesgue measure. Then:

		\begin{equation}
			\int_{\mathbb R^2} f(x)f(y) w(x-y) dx dy \le 
			\int_{\mathbb R^2} f^*(x)f^*(y) w(x-y) dx dy
		\end{equation}
		with equality only if $f^*$ is a translation of $f$.% in the sense of XXXX.
	\end{lemma}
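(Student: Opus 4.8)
The plan is to derive the inequality from the layer-cake decomposition, reducing successively to characteristic functions, then to the case where $w$ is the indicator of a centred interval, and finally to the classical Riesz rearrangement inequality for three sets. First, using $f(x)=\int_0^\infty\chi_{\{f>a\}}(x)\,da$ in both factors and, since $w$ is symmetric decreasing, $w(z)=\int_0^\infty\chi_{(-r_b,r_b)}(z)\,db$ with $(-r_b,r_b)=\{w>b\}$, Tonelli's theorem (all integrands nonnegative) gives
\[
\int_{\R^2}f(x)f(y)w(x-y)\,dx\,dy=\int_0^\infty\!\int_0^\infty\!\int_0^\infty I\big(\{f>a_1\},\{f>a_2\},(-r_b,r_b)\big)\,da_1\,da_2\,db,
\]
where $I(A,B,C):=\int_{\R^2}\chi_A(x)\chi_B(y)\chi_C(x-y)\,dx\,dy$. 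Since $\{f^{*}>a\}$ is, up to a null set, the interval centred at the origin of the same Lebesgue measure as $\{f>a\}$, running the same computation with $f^{*}$ reduces everything to proving, for all measurable $A,B\subset\R$ of finite measure and all $r>0$,
\begin{equation}\label{eq:setriesz}
I\big(A,B,(-r,r)\big)\le I\big(A^{*},B^{*},(-r,r)\big).
\end{equation}

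The map $A\mapsto I(A,B,(-r,r))$ is Lipschitz for the symmetric-difference metric, $|I(A,B,C)-I(A',B,C)|\le 2r\,\mu(A\triangle A')$, and similarly in the second argument; moreover $\mu(A^{*}\triangle (A')^{*})=|\mu(A)-\mu(A')|\le\mu(A\triangle A')$. By inner regularity any finite-measure set is approximated in symmetric difference by a finite union of bounded open intervals, so \eqref{eq:setriesz} for such $A,B$ implies the general case. When $A$ and $B$ are single intervals one writes $I(A,B,(-r,r))=\int_B(\chi_A*\chi_{(-r,r)})(y)\,dy$ and observes that $\chi_A*\chi_{(-r,r)}$ is a symmetric unimodal trapezoid centred at the midpoint of $A$; translating $A$ and $B$ to be centred at the origin does not decrease this integral, which is exactly \eqref{eq:setriesz} in this case (one further layer-cake of the trapezoid plus the elementary fact $\mu(J_1\cap J_2)\le\mu(J_1^{*}\cap J_2^{*})$ for intervals).

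The passage from single intervals to arbitrary finite unions of intervals is the core of the argument and, I expect, the part hardest to present cleanly. One inducts on the total number of component intervals of $A$ and $B$: if $A$ (say) has at least two components, a sliding rearrangement that brings two of them together with measures fixed can be shown not to decrease $I(A,B,(-r,r))$ while lowering the component count, and iterating drives $A,B$ to centred intervals. This is precisely the one-dimensional mechanism behind the Brascamp--Lieb--Luttinger theorem (see Lieb and Loss, \emph{Analysis}, Theorem~3.7, or the original Brascamp--Lieb--Luttinger paper); alternatively one can deduce \eqref{eq:setriesz} from two-point (polarization) symmetrization together with the convergence of iterated polarizations to the symmetric decreasing rearrangement.

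Finally, for the equality clause: tracking equality back through the reduction shows that, when $w$ is \emph{strictly} symmetric decreasing (which includes the Gaussian weight $e^{-\pi x^{2}}$), equality forces almost every superlevel set $\{f>a\}$ to be an interval about one fixed centre, i.e.\ $f$ agrees a.e.\ with a translate of $f^{*}$; the sharp rigidity statement for the general Riesz inequality is due to Burchard. We emphasize that the chain of identities preceding the Lemma uses only \eqref{eq:setriesz}: for $f\ge 0$ it gives $\int_{\R^2}f(x)f(y)w(x-y)\,dx\,dy\le\int_{\R^2}f^{*}(x)f^{*}(y)w(x-y)\,dx\,dy$ while $\|f^{*}\|_1\|f^{*}\|_2=\|f\|_1\|f\|_2$, so the supremum defining $C_{opt}$ is unchanged upon restricting to symmetric decreasing $f$.
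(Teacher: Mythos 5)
The paper does not prove this lemma; it is quoted as the classical Riesz rearrangement inequality (together with its rigidity statement, which in one dimension goes back to Riesz and in full generality to Burchard) and used without proof. So there is no proof in the paper to compare against, and your task was really to verify that the statement as used is indeed the standard theorem and that a correct proof exists along the lines you sketch.

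Your reduction is the standard and correct one: layer-cake in both $f$-factors and (using that $w$ is symmetric decreasing) in $w$, Tonelli, and the observation that $\{f^*>a\}$ is the centred interval with $\mu(\{f^*>a\})=\mu(\{f>a\})$, reduce everything to the set-level inequality $I(A,B,(-r,r))\le I(A^*,B^*,(-r,r))$. The Lipschitz bound $|I(A,B,C)-I(A',B,C)|\le\mu(C)\,\mu(A\triangle A')$ and the contraction $\mu(A^*\triangle (A')^*)\le\mu(A\triangle A')$ justify passing from finite unions of intervals to general finite-measure sets, and the single-interval case via the trapezoid $\chi_A*\chi_{(-r,r)}$ plus the bathtub principle is correct. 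The one genuine gap you flag yourself: going from single intervals to finite unions is the entire content of the one-dimensional Brascamp--Lieb--Luttinger theorem (or equivalently of the convergence of iterated polarizations), and you defer it to the literature rather than prove it. That is acceptable here since the paper also simply cites the result, but be aware that you have not actually produced a self-contained proof; you have reduced to a theorem of comparable depth.

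One point where you are actually more careful than the paper: the paper's equality clause (``with equality only if $f^*$ is a translation of $f$'') is not literally true for $w=\chi_{[-1/2,1/2]}$, one of the paper's two main examples, since any nonnegative $f$ supported in an interval of length $\le 1/2$ gives equality regardless of its shape. You correctly restrict the rigidity conclusion to strictly symmetric decreasing $w$ (e.g.\ the Gaussian) and cite Burchard for the general characterization. Fortunately this does not affect the paper, which only uses the inequality itself (to restrict the supremum defining $C_{opt}$ to symmetric decreasing $f$, noting $\|f^*\|_1\|f^*\|_2=\|f\|_1\|f\|_2$), not the equality case.
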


%	\begin{proof}
		%\newb{Aparece en wikipedia. Buscar mejor referencia! haha}
		%All the symmetric rearrangement proofs are the same.
	%\end{proof}
For $R>0$, let 
$$
	C_{opt,R}:=\sup_{\substack{f\in L^{1}(\R)\cap L^{2}(\R)\\ \|f\|_1\|f\|_2\neq0\\ \operatorname{supp}(f)\subset [-R,R]}}
				\frac{\int_{\mathbb R^2} f(x)f(y)w(x-y) dx dy }{\|f\|_1\|f\|_2}.
$$

A useful tool to establish Theorem \ref{thm:main} is the following local version of the theorem. 
\begin{proposition}\label{restricted extremizers existence}
There exists a function $\optf\in L^{1}(\R)\cap L^{2}(\R)$, supported in $[-R,R]$ such that
$$
C_{opt,R}:=\frac{\int_{\mathbb R^2} \optf(x)\optf(y)w(x-y) dx dy }{\|\optf\|_1\|\optf\|_2}
$$
\end{proposition}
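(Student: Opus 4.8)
The plan is to run the direct method of the calculus of variations on the bounded interval $[-R,R]$, where the quadratic form defining the numerator becomes continuous (not merely semicontinuous) along maximizing sequences. First, by the Riesz rearrangement inequality together with the scale-invariance of the quotient, I would fix a maximizing sequence $(f_n)$ for $C_{opt,R}$ consisting of nonnegative, symmetric decreasing functions supported in $[-R,R]$ and normalized so that $\|f_n\|_2 = 1$. By Cauchy--Schwarz on the bounded interval, $\|f_n\|_1 \le \sqrt{2R}$, so $(f_n)$ is bounded in the Hilbert space $L^2([-R,R])$ and, passing to a subsequence, $f_n \rightharpoonup \optf$ weakly in $L^2([-R,R])$ for some $\optf$. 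Since the nonnegative symmetric decreasing functions form a closed convex cone in $L^2$, the limit $\optf$ is again nonnegative, symmetric decreasing, and supported in $[-R,R]$.

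Next, I would rewrite the numerator as a quadratic form of a compact operator. Extending functions by zero and writing $T_w f := (f*w)\big|_{[-R,R]}$, evenness of $w$ gives $\int_{\R^2} f(x)f(y)w(x-y)\,dx\,dy = \langle f, T_w f\rangle_{L^2([-R,R])}$. The operator $T_w$ is bounded on $L^2([-R,R])$ by Young's inequality, and it is compact: for the truncations $w_n := \min(w,n) \in L^1(\R)\cap L^\infty(\R)$, the kernel $w_n(x-y)$ lies in $L^2([-R,R]^2)$, so $T_{w_n}$ is Hilbert--Schmidt, while $\|T_w - T_{w_n}\|_{op} \le \|w - w_n\|_{1} \to 0$. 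Compactness yields $T_w f_n \to T_w \optf$ strongly in $L^2([-R,R])$, and combining this with $f_n \rightharpoonup \optf$ gives $\langle f_n, T_w f_n\rangle \to \langle \optf, T_w \optf\rangle$; that is, the numerator converges to $\int_{\R^2}\optf(x)\optf(y)w(x-y)\,dx\,dy$.

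For the denominator, boundedness of the interval is what saves us: $\chi_{[-R,R]} \in L^2$, so weak convergence forces $\|f_n\|_1 = \int f_n \to \int \optf = \|\optf\|_1$ (here nonnegativity is used), while weak lower semicontinuity of the $L^2$ norm gives $\|\optf\|_2 \le 1$. To rule out $\optf \equiv 0$, use the elementary bound $\int_{\R^2} f(x)f(y)w(x-y)\,dx\,dy \le \|w\|_\infty\|f\|_1^2 = \|f\|_1^2$; along the maximizing sequence this gives $\|f_n\|_1 \ge C_{opt,R} - o(1)$, hence $\|\optf\|_1 \ge C_{opt,R} > 0$. Assembling the pieces,
\[
C_{opt,R} = \lim_n \frac{\langle f_n, T_w f_n\rangle}{\|f_n\|_1\|f_n\|_2} = \frac{\int_{\R^2}\optf(x)\optf(y)w(x-y)\,dx\,dy}{\|\optf\|_1} \le \frac{\int_{\R^2}\optf(x)\optf(y)w(x-y)\,dx\,dy}{\|\optf\|_1\|\optf\|_2},
\]
and since the last quotient is $\le C_{opt,R}$ by definition of the supremum, equality holds throughout (in particular $\|\optf\|_2 = 1$), so $\optf$ is the claimed extremizer.

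The main obstacle is precisely the continuity of the numerator: weak $L^2$ convergence alone would only control a single linear factor, so one genuinely needs the compactness of $T_w$, which in turn relies on restricting to the bounded interval $[-R,R]$ (and on approximating a general $L^1$ weight by bounded ones). The secondary subtlety — that the $L^1$ norm in the denominator does not drop in the limit — is again resolved by working on a bounded interval with nonnegative functions, and non-triviality of the limit is handled by the trivial upper bound on the quadratic form.
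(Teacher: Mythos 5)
Your argument is correct and, unlike the paper, is self-contained: the paper simply asserts that the proposition ``follows directly from an adaptation of \cite[Theorem~1.6]{MR}'' and gives no proof. What you supply is the natural direct-method argument on the fixed interval, and you correctly isolate the two features that make it close: (a)~the restricted convolution operator $T_w$ is compact on $L^2([-R,R])$ --- Hilbert--Schmidt once the kernel is truncated to be bounded, with the truncation error controlled in operator norm by Young's inequality $\|T_w-T_{w_n}\|\le\|w-w_n\|_1$ --- so the quadratic numerator is weak-to-strong continuous along the maximizing sequence; and (b)~on a bounded interval $\chi_{[-R,R]}\in L^2$, so for a nonnegative sequence the $L^1$ norm passes to the weak limit exactly rather than merely lower-semicontinuously, which keeps the denominator from dropping. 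The nontriviality of the limit via $\langle f,T_wf\rangle\le\|w\|_\infty\|f\|_1^2$ and the closing chain of inequalities (using $\|\optf\|_2\le 1$ by weak lower semicontinuity, then forcing equality throughout via the definition of the supremum) are all sound. One small streamlining: the symmetric-decreasing property of the weak limit, though true (the cone is convex and strongly, hence weakly, $L^2$-closed), is never used --- only nonnegativity is needed to identify $\|f_n\|_1$ with $\int f_n$. You could therefore skip the Riesz rearrangement entirely and just replace $f_n$ by $|f_n|$, which can only increase the numerator since $w\ge 0$.
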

This follows directly from an adaptation of \cite[Theorem 1.6]{MR}.

	\begin{proof}[Proof of Theorem \ref{thm:main}]
	%Restrict the problem to functions supported on $[-R,R]$ (for large $R$). Now we know (by the result of Jose \& João) that extremizers of this subclass do exist. Extremizers are still symmetric decreasing, and we will show that the extremizers do not change as $R$ gets large.\\

Let $R>0$, sufficiently large. By the Proposition \ref{restricted extremizers existence} we know that there exists a function $\optf\in L^{1}(\R)\cap L^{2}(\R)$ with support in $[-R,R]$ such that
\begin{equation}\label{opt constant R}
C_{opt,R}:=\frac{\int_{\mathbb R^2} \optf(x)\optf(y) w(x-y) dx dy }{\|\optf\|_1\|\optf\|_2}.
\end{equation}
We will show that the function $\optf$ do not depend on $R$ as soon as $R$ is large enough.

	The function $\optf$ maximizes the functional
	\begin{equation}
		\mathcal F (g) :=  \log \left[\int_{\mathbb R^2} g(x)g(y)w(x-y) dx dy \right]  -  \log \|g\|_1 - \frac 1 2 \log \|g\|_2^2
	\end{equation}
	over the set of functions $g\in L^{1}(\R)\cap L^{2}(\R)$ supported in $[-R,R]$. This leads to the following Euler-Lagrange equation inside the support of $\optf$:

	\begin{equation}\label{euler-lagrange}
		0 = \nabla_f \mathcal F (\optf) = \frac{2 \optf\ast w}{\int_{\mathbb R^2} \optf(x)\optf(y) w(x-y) dx dy}  - \frac 1 {\|\optf\|_1} - \frac{\optf}{\|\optf\|_2^2}.
	\end{equation}
	
	Let $a>0$ such that the support of $\optf$ is equal to $[-a,a]$. Integrating \eqref{euler-lagrange} from $-a$ to $a$ and rearranging, we have
	\begin{align*}
	2 \frac{\int_{[-a,a]\times \mathbb R} \optf(x) w(x-y) dxdy}
	{C_{opt,R}\|\optf\|_1\|\optf\|_2}
	=&
	\frac{1}{\|\optf\|_2^2}\int_{-a}^{a}\optf(x)dx+\frac{2a}{\|f\|_1} 
	\\=&  
	\frac{\|\optf\|_1}{\|\optf\|_2^2} + \frac{2a}{\|\optf\|_1}.
	\end{align*}

	On the other hand
	$$
	\int_{[-a,a]\times \mathbb R} \optf(x) w(y-x) dxdy = \|w\|_1\|\optf\|_1.
	$$
	Combining both equalities we obtain
	\begin{equation}
	\label{eq:support_bound_coarse}
	a = \frac{\|\optf\|_1}{\|\optf\|_2} \left (\frac{\|w\|_1}{C_{opt,R}} - \frac 1 2 \frac{\|\optf\|_1}{\|\optf\|_2} \right ) \le \frac{\|\optf\|_1 \|w\|_1 }{\|\optf\|_2C_{opt,R}},
	\end{equation}
	moreover, from H\"older's inequality we know that ${\|\optf\|_1} \le \sqrt{2a} {\|\optf\|_2}$, and therefore

	\begin{equation}
		a \le 2 \frac  {\|w\|_1^2 }{C_{opt,R}^2}. 
	\end{equation}
	%\newpage
	\textbf{Finishing the proof.}
The last remaining step is showing that the non-conmpactly-supported problem has a solution as well. That will follow if we show that
	\begin{equation}
	    C_{opt} = \lim_{R\to\infty} C_{opt,R}
	\end{equation}
	as in that case, an extremizer witnessing $C_{opt,R}$ for $R$ large enough will witness $C_{opt}$ as well. Clearly $C_{opt} \ge \lim_{R\to\infty} C_{opt,R}$ so we must show the opposite. Let $\epsilon>0$. Let $g_0\in L^1(\mathbb{R})\cap L^2(\mathbb{R})$ be an almost-extremizer, such that:
	\[
	C_{opt}\le \frac{\int_{\mathbb R^2} g_0(x)g_0(y)w(x-y) dx dy }{\|g_0\|_1\|g_0\|_2} + \epsilon/2
	\]
	now take $g_1$ compactly supported with $\|g_1-g_0\|_{1}+\|g_1-g_0\|_{2} <\delta$. If $\delta$ is small enough, that implies that
	
	\[
	\left|
	\frac{\int_{\mathbb R^2} g_0(x)g_0(y)w(x-y) dx dy }{\|g_0\|_1\|g_0\|_2}
	-
	\frac{\int_{\mathbb R^2} g_1(x)g_1(y)w(x-y) dx dy }{\|g_1\|_1\|g_1\|_2}
	\right| < \epsilon/2.
	\]
	
	Let $[-R_1,R_1]$ be the support of $g_1$. Then $C_{opt} \le C_{opt,R_1}-\epsilon$.
	\end{proof}

\begin{remark}
	%Using control of $\|w\|_2$, 
	Finer bounds for $a$ can be found by squaring and integrating \eqref{euler-lagrange}, in the form of:
	\[
		\frac{4}
			 {C_{opt,R}\|\optf\|_1^2\|\optf\|_2^2} 
		\int_{-a}^a |w*\optf|^2  dx =
				\int_{-a}^a \left |
			\frac{1}{\|\optf\|_1}+\frac{\optf}{\|\optf\|_2^2}
		\right |^2 dx,
	\]
%	Intermediate step:
%		\frac{4 \|w\|_2^2}
%			 {C_{opt,R}\|\optf\|_2^2} 
%	\ge&
%		\frac{2a}{\|\optf\|_1^2} + 3 \frac{1}{\|\optf\|_{2}^2}
and using Young's inequality in the left hand side 
\begin{align*}
	{\|w\|^2_2\|\optf\|^2_1}
		\geq  
		\int_{-a}^a |w*\optf|^2  dx, 
\end{align*}
which leads to

	\[
	\frac{\|\optf\|_1}{\|\optf\|_2} 
	\ge
	\sqrt{2a} \left(\frac{4 \|w\|_2^2}{C_{opt,R}} - 3\right)^{- \frac 1 2}.
	\]
	This inequality gives a better bound for \eqref{eq:support_bound_coarse}, giving a lower bound for the substracted term, namely the bound:
	\begin{equation}
	\label{eq:support_bound_fine}
	a \le  
	2\left (
		\frac{\|w\|_1}{C_{opt,R}}
			- 
		\sqrt{\frac{a}2} 
		\left(
			\frac{4 \|w\|_2^2}
				 {C_{opt,R}} - 3
		\right)^{- \frac 1 2} 
	\right )^2
	\end{equation}
	The right hand side of \eqref{eq:support_bound_fine} is decreasing in $C_{opt,R}$. Therefore one can find bounds for $a$ by substituting $C_{opt,R}$ even when its exact value is not known.
\end{remark}

\section{Approximation inequalities}
%!TEX root = main.tex
\textbf{Notation:} Trough this section $\bra f$ will denote a row vector, $\ket f$ a column vector. If $a$ is a matrix, $\bra f a \ket g$ will be the vector-matrix-vector product. Moreover, $\ket 1$ will be the column vector with all ones. We will use the notation $\braket fg$ as a dot product for $f,g$, with the dot product $\braket fg = \delta_x \sum f_i g_i$ ($\delta_x$ is the discretization scale). If the discretized functions are supported in $(-a,a)$, then $\braket 11 = 2a$. We denote by $K_w$ the convolution operator associated to $w$.

Let 
$
L^{1,2}([a,b]):= L^{1}([a,b])\cap L^{2}([a,b]),$ and, for each function $f\in L^{1,2}([a,b])$ we define
$\|f\|_{L^{1:2}([a,b])} := \|f\|^{1/2}_{L^1([a,b])}\|f\|^{1/2}_{L^2([a,b])}$.

\begin{definition}
We define some spaces as the space of measurable functions (up to a.e. equivalence) with the following norms (with $\lambda$ a parameter greater than zero):

\begin{itemize}
    %\item %$\|f\|^2_{L^{1:2}([a,b])} = \|f\|_{L^1([a,b])}\|f\|_{L^2([a,b])}$
    \item $\|f\|_{H_\lambda([a,b])}^{2} = \lambda\|f\|_2^2+\lambda^{-1}\left|\int_a^b f dx \right|^2$.
    \item $\|f\|_{B_\lambda([a,b])}^{2} = \lambda\|f\|_2^2+\lambda^{-1}\left|\int_a^b |f| dx \right|^2$.
\end{itemize}
These spaces satisfy nice properties, including the following:
\begin{itemize}
    \item $\|f\|^2_{L^{1:2}([a,b])} = \frac 1 2 \inf_{\lambda} \|f\|^2_{B_{\lambda}([a,b])}$. This is a consequence of AM-GM inequality.
    \item $ \|f\|_{B_{\lambda}([a,b])} \ge \|f\|_{H_{\lambda}([a,b])}$ with equality if and only if $f$ is nonnegative.
    \item $H_{\lambda}$ is a Hilbert space. Since, by definition, $\|\cdot\|_{H_{\lambda}([a,b])}$ satisfies the parallelogram law.
\end{itemize}
\end{definition}
{\remark{
Observe that by H\"older's inequality and AM-GM inequality we have
\begin{align*}
&\|f+g\|^2_{H_{\lambda}([a,b])}-\|f\|^2_{H_{\lambda}([a,b])}-\|g\|^2_{H_{\lambda}([a,b])}-2\|f\|_{H_{\lambda}([a,b])}\|g\|_{H_{\lambda}([a,b])}\\
&=\lambda\|f+g\|_{L^2([a,b])}^2+\lambda^{-1}\left|\int_a^b f+g dx \right|^2\\
&\ \ \ -\lambda\|f\|_{L^2([a,b])}^2-\lambda^{-1}\left|\int_a^b f dx \right|^2-\lambda\|g\|_{L^2([a,b])}^2-\lambda^{-1}\left|\int_a^b g dx \right|^2\\
&\ \ \ -2\|f\|_{H_{\lambda}([a,b])}\|g\|_{H_{\lambda}([a,b])}\\
&\leq 2\lambda\int_{a}^{b}fgdx+\frac{2}{\lambda}\left|\int_{a}^{b}fdx\right| \left|\int_{a}^{b}gdx\right|-2\|f\|_{H_{\lambda}([a,b])}\|g\|_{H_{\lambda}([a,b])}\\
&\leq 2\lambda\|f\|_{L^{2}{([a,b])}}\|g\|_{L^{2}([a,b])}+\frac{2}{\lambda}\left|\int_{a}^{b}fdx\right| \left|\int_{a}^{b}gdx\right|-2\|f\|_{H_{\lambda}([a,b])}\|g\|_{H_{\lambda}([a,b])}\\
&\leq 0, \ \text{for any two functions}\ f,g\in H_{\lambda}([a,b]).
\end{align*}
This means that
$\|\cdot\|_{H_{\lambda}([a,b])}$ satisfies the triangle inequality, so this is really a norm (since all the other properties trivially hold).
Similarly we can see that
$\|\cdot\|_{B_{\lambda}}$} 
is in fact a norm.}

The strategy to find the optimal value $C_{opt}$ will be in two steps:

\begin{itemize}
    \item First, by the addition of a parameter $\lambda$ we will turn the optimization problem into a computtionally tractable optimization problem.
    \item We will then show that a suitably discretized version of the computationally tractable problem is quantitatively close to the original continuous problem.
\end{itemize}

\subsection*{A computationally tractable relaxation of the problem.} For any $\lambda>0$, let 
\begin{equation}\label{c lambda definition}
c_{\lambda}:=c_{\lambda}(w):=\max_{f \in L^1 \cap L^2}  2 \frac{\bra f w \ket f}{\lambda  \braket f f + \lambda^ {-1}\braket f 1 \braket 1 f}.
\end{equation}
Observe that by Fubini's theorem and H\"older's inequality we have
\begin{equation}\label{upper bounds for c lambda depending on lambda}
    c_{\lambda}\leq \min\{2\lambda,2/\lambda\}.
\end{equation}
Using the fact that $\min_{\lambda >0} \lambda a^2+\lambda^ {-1} b^2 = 2 ab$ by AM-GM inequality. We can turn our original problem into:
$$
C_{opt}:=c(w) :=\max_{\lambda>0}c_{\lambda}(w)= \max_{\lambda>0} \max_{f \in L^1 \cap L^2}  2 \frac{\bra f w \ket f}{\lambda  \braket f f + \lambda^ {-1}\braket f 1 \braket 1 f}. 
$$

\begin{lemma}\label{relation between norms in H lambda and L12}

We have that 
$$
\max_{\|f\|_{L^{1:2}([a,b])}\le 1} \langle f|K_w|f\rangle = 2\max_{\lambda>0} \max_{\substack{\|f\|_{H_\lambda ([a,b])}\le 1\\f\ge 0
             }
     }
\langle f|K_w|f\rangle
$$
moreover, the extremizers to $\max_{\|f\|_{H_\lambda ([a,b])}\le 1} \langle f|K_w|f\rangle$ (which exist by the Hilbert theory) are symmetric decreasing nonnegative.
\end{lemma}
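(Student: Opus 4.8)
The statement has two halves — the identity and the structure of the extremizers — and I would treat them separately.

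\textbf{The identity.} The first step is to reduce the left-hand side to nonnegative competitors: since $w\ge 0$ we have $\langle f|K_w|f\rangle\le\langle\,|f|\,|K_w|\,|f|\,\rangle$ (the trivial, sign-comparison case of the Riesz inequality quoted above), while $\||f|\|_{L^{1:2}([a,b])}=\|f\|_{L^{1:2}([a,b])}$ and $|f|$ is supported where $f$ is; hence the supremum defining the left side is unchanged if we impose $f\ge0$. For $f\ge0$ I would then invoke the two facts recorded right after the definitions of $H_\lambda,B_\lambda$: that $\|f\|_{H_\lambda([a,b])}=\|f\|_{B_\lambda([a,b])}$, and that, by AM--GM, $\|f\|_{L^{1:2}([a,b])}^{2}=\tfrac12\inf_{\lambda>0}\|f\|_{B_\lambda([a,b])}^{2}=\tfrac12\inf_{\lambda>0}\|f\|_{H_\lambda([a,b])}^{2}$. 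Since $\langle f|K_w|f\rangle\ge0$ when $f\ge0$, dividing gives, for every nonzero $f\ge0$,
\[
\frac{\langle f|K_w|f\rangle}{\|f\|_{L^{1:2}([a,b])}^{2}}=2\,\sup_{\lambda>0}\frac{\langle f|K_w|f\rangle}{\|f\|_{H_\lambda([a,b])}^{2}} .
\]
Taking the supremum over $f\ge0$ and exchanging the two suprema (both iterated suprema equal the joint supremum over the pair $(f,\lambda)$), then rewriting each scale-invariant quotient as a maximum over the relevant unit ball (numerator and denominator are positively $2$-homogeneous), produces exactly the claimed equality. I would also record that all the suprema are attained: the left side by the $[-R,R]$ version of Proposition~\ref{restricted extremizers existence}; the inner maximum over $\{f\ge0,\ \|f\|_{H_\lambda}\le1\}$ because the kernel $w(x-y)$ lies in $L^{\infty}([a,b]^{2})$, so $K_w$ is Hilbert--Schmidt on $L^{2}([a,b])$ and $f\mapsto\langle f|K_w|f\rangle$ is weakly continuous on the Hilbert space $H_\lambda([a,b])$, whose unit ball meets the weakly closed cone $\{f\ge0\}$ in a weakly compact set; and the outer supremum over $\lambda$ because the inner maximum depends continuously on $\lambda$ and tends to $0$ as $\lambda\to0^{+}$ and as $\lambda\to\infty$.

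\textbf{Extremizers are nonnegative and symmetric decreasing.} I read this assertion with the same sign constraint $f\ge0$ as in the displayed identity (it is false without it — see below — and it is the version actually needed), and with $[a,b]$ symmetric about the origin as in the application. Let $f\ge0$ realize $\max_{g\ge0,\ \|g\|_{H_\lambda}\le1}\langle g|K_w|g\rangle$ and let $f^{*}$ be its symmetric decreasing rearrangement. Because $f\ge0$ one has $\int f^{*}=\int f$ and $\|f^{*}\|_{2}=\|f\|_{2}$, hence $\|f^{*}\|_{H_\lambda}=\|f\|_{H_\lambda}\le1$, so $f^{*}$ is admissible; by the Riesz rearrangement inequality $\langle f^{*}|K_w|f^{*}\rangle\ge\langle f|K_w|f\rangle$, so $f^{*}$ is also an extremizer. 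Equality therefore holds in the Riesz inequality, and by its equality case $f$ agrees a.e.\ with a translate of $f^{*}$; thus every extremizer is, up to translation, a nonnegative symmetric decreasing function. One then checks directly that it satisfies the Euler--Lagrange equation $K_w f^{*}=\mu\bigl(\lambda f^{*}+\lambda^{-1}(\int f^{*})\,\ket 1\bigr)$ with $\mu=\langle f^{*}|K_w|f^{*}\rangle$, which can be recorded for later use.

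\textbf{Where the difficulty lies.} The one genuinely delicate point is the interplay between the normalization and the rearrangement: symmetrization fixes $\|\cdot\|_{2}$ but replaces $\int f$ by $\int|f|$, so it preserves the $H_\lambda$-norm \emph{only} when $f\ge0$ forces $|\int f|=\int|f|$. This is precisely why the nonnegativity constraint cannot be dropped from the inner maximum: for $w=e^{-\pi x^{2}}$ and $\lambda$ small, any odd $\psi\in L^{2}([a,b])$ has $\langle\psi|K_w|\psi\rangle>0$ (since $\widehat{w}>0$) and $\|\psi\|_{H_\lambda}^{2}=\lambda\|\psi\|_{2}^{2}$, so $\langle\psi|K_w|\psi\rangle/\|\psi\|_{H_\lambda}^{2}\to\infty$ as $\lambda\to0$, whereas $\langle f|K_w|f\rangle/\|f\|_{H_\lambda}^{2}\le\lambda$ for all $f\ge0$; the unconstrained maximizer is then sign-changing. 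The remaining steps — justifying the interchange of suprema, the passage between scale-invariant quotients and constrained maxima, and the attainment assertions above — are routine.
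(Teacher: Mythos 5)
Your argument is the same one the paper uses, just written out at higher resolution: reduce to $f\ge 0$ via $\langle f|K_w|f\rangle\le\langle|f|\,|K_w|\,|f|\rangle$ and $\||f|\|_{L^{1:2}}=\|f\|_{L^{1:2}}$, identify $\|f\|^2_{L^{1:2}}=\frac12\inf_\lambda\|f\|^2_{B_\lambda}=\frac12\inf_\lambda\|f\|^2_{H_\lambda}$ for $f\ge 0$, pass from infimum in the denominator to supremum of the quotient, exchange suprema, and use homogeneity to recast as unit-ball maximizations; the Riesz rearrangement inequality then handles the symmetric-decreasing claim. The extra details you supply --- attainment of the inner maximum via Hilbert--Schmidt compactness of $K_w$ on $[a,b]$, decay of $c_\lambda$ as $\lambda\to 0^+$ or $\lambda\to\infty$ (which is \eqref{upper bounds for c lambda depending on lambda} in the text) --- are correct and are in the spirit of what the paper elides.

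Your observation about the \emph{moreover} clause is a genuine catch: as literally written, $\max_{\|f\|_{H_\lambda}\le 1}\langle f|K_w|f\rangle$ omits the constraint $f\ge 0$, and for small $\lambda$ the unconstrained extremizer is sign-changing (your odd-function example, exploiting $\widehat w>0$ near the origin, shows the unconstrained value grows like $\lambda^{-1}$ while the nonnegative-constrained value is bounded by $\lambda$). The statement is correct with the constraint $f\ge 0$ reinstated --- matching what appears in the displayed identity --- or when read only at the optimal $\lambda=\lambda^*$; either reading is what the rest of the paper actually uses. Your symmetrization argument (rearrangement preserves $\|\cdot\|_{H_\lambda}$ for $f\ge 0$ because $|\int f|=\int|f|$, then Riesz and its equality case) is exactly the intended one.
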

\begin{proof}
Observe that
\begin{align*}
    \max_{\|f\|_{L^{1:2}([a,b])}\le 1} \langle f|K_w|f\rangle &=\max_{f\in L^{1:2}([a,b])} \frac{\langle f|k_w|f\rangle}{\|f\|^2_{L^{1:2}[(a,b)]}}  \\
    &= 2\max_{f\in L^{1:2}([a,b])} \frac{\langle f|k_w|f\rangle}{\inf_{\lambda>0}\|f\|^2_{B_{\lambda}[(a,b)]}}\\
    &= 2\max_{\substack{f\in {H_\lambda ([a,b])}\\f\ge 0}} \frac{\langle f|k_w|f\rangle}{\inf_{\lambda>0}\|f\|^2_{H_{\lambda}[(a,b)]}}\\
    &=2\max_{\lambda>0} \max_{\substack{\|f\|_{H_\lambda ([a,b])}\le 1\\f\ge 0}}
\langle f|K_w|f\rangle.
\end{align*}
The last part of the statement follows from the Riesz rearrangement inequality.
\end{proof}

The problem of finding $c_{\lambda}$ for a fixed $\lambda$ becomes now essentially a problem about finding the spectrum of $w$ (as a convolution operator) in a certain Hilbert space, with certain subtleties arising from the fact that we have a restriction to $f\ge 0$. These subtleties are addressed in Section 3.3. 

\subsection*{Relating a discretized version of the problem to the continuous problem.}  We start by defining our discretized spaces as the space of step functions on intervals of length $\delta$:

\begin{definition}
 Given $\delta>0$ the set $V_\delta$ will be the set of functions that are constant on intervals of the form $[n\delta, (n+1)\delta)$. Given a function $f$ we define $[f]_\delta \in V_\delta$ by $[f]_\delta = \delta^{-1}\int_{\delta n}^{\delta(n+1)} f(s) ds$. We also define $\{f\}_{\delta} := f-[f]_{\delta}$.
\end{definition}

Observe that $\|[f]_{\delta}\|_1=\|f\|_1$ and by H\"older's inequality $\|[f]_{\delta}\|_2\leq \|f\|_2$ for all $f\in L^{1}(\R)\cap L^{2}(\R)$. 
Moreover, by the previous observation, \begin{equation}\label{relacion entre normas en H}
\|[f]_{\delta}\|_{H_{\lambda}}\leq \|f\|_{H_{\lambda}} \end{equation}
for all $f\in H_{\lambda}$.

Our next lemma establishes smoothness properties for the extremizers on the interior of the support. %under mild conditions on the weight $w$.
Regularity (in the form of \ref{l-infinity bound for the derivative}) will be crucial  %in the following sections, 
in order to implement a numerical scheme to find the constants $C_{opt}$.

\begin{lemma}\label{l-infinity bound for the derivative}
Let $\hat f$ be an extremizer to the problem $\max_{\|f\|_{H_\lambda ([a,b])}\le 1} 2\langle f|K_w|f\rangle
$.\\
Let $c_\lambda =  \max_{f\ge 0; \|f\|_{H_\lambda ([a,b])}\le 1} 2 \langle f|K_w|f\rangle
$. Then  $\|\hat f\|_{2} \le \lambda^{-1/2}$ and $\|\hat f'\|_{2} \le \frac{4}{c_\lambda \lambda^2}$.
\end{lemma}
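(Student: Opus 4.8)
The plan is to exploit the variational characterization of $\hat f$ as a maximizer over the unit ball of the Hilbert space $H_\lambda([a,b])$. The bound $\|\hat f\|_2 \le \lambda^{-1/2}$ is immediate: since $\|\hat f\|_{H_\lambda([a,b])} \le 1$ and $\|\hat f\|_{H_\lambda([a,b])}^2 = \lambda \|\hat f\|_2^2 + \lambda^{-1}|\int_a^b \hat f|^2 \ge \lambda \|\hat f\|_2^2$, we get $\|\hat f\|_2^2 \le \lambda^{-1}$. The substance of the lemma is the bound on $\|\hat f'\|_2$, and for that I would first write down the Euler–Lagrange equation on the interior of the support. Since $\hat f$ maximizes the quadratic form $2\langle f | K_w | f\rangle$ subject to $\|f\|_{H_\lambda([a,b])}^2 \le 1$, and the constraint is active (the functional is $2$-homogeneous so scaling up to the boundary can only help), Lagrange multipliers give, inside the support,
\[
2 K_w \hat f = \mu\left(\lambda \hat f + \lambda^{-1}\left(\int_a^b \hat f\right)\mathbf{1}\right)
\]
for some multiplier $\mu$. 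Pairing both sides with $\hat f$ and using $\|\hat f\|_{H_\lambda}^2 = 1$ shows $\mu = 2\langle \hat f | K_w|\hat f\rangle$, which by Lemma \ref{relation between norms in H lambda and L12} and the nonnegativity of the extremizer equals $c_\lambda$ (here one uses that the $H_\lambda$-extremizer is symmetric decreasing nonnegative, so it also witnesses the constrained-to-$f\ge 0$ problem). Thus on the support,
\[
\hat f = \frac{2}{c_\lambda \lambda} K_w \hat f - \frac{1}{\lambda^2}\left(\int_a^b \hat f\right)\mathbf{1}.
\]

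Now I differentiate this identity. The constant term drops out, so $\hat f' = \frac{2}{c_\lambda \lambda}(K_w \hat f)' = \frac{2}{c_\lambda \lambda}(w' * \hat f)$ — but it is cleaner to move the derivative onto $\hat f$, writing $(w * \hat f)' = w * \hat f'$ is not available since $\hat f$ may jump at the endpoints; instead I would use $(w*\hat f)' = w * (\hat f \text{ as a distribution})' $ and note that as a distribution on all of $\R$, $\hat f'$ equals its interior derivative plus boundary point masses of size $\hat f(a^{+})$ and $-\hat f(b^{-})$. Then
\[
\|\hat f'\|_{L^2[a,b]} \le \frac{2}{c_\lambda \lambda}\,\|w * (\hat f)'\|_{L^2} \le \frac{2}{c_\lambda \lambda}\,\|w\|_1\, \| (\hat f)'\|_{\mathcal M} \le \frac{2}{c_\lambda \lambda}\cdot 1 \cdot \|(\hat f)'\|_{\mathcal M}
\]
by Young's inequality (using $\|w\|_1 = 1$), where $\|(\hat f)'\|_{\mathcal M}$ is the total variation of $\hat f$. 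Since $\hat f$ is symmetric decreasing and nonnegative, its total variation is exactly $2\hat f(a^+) \le 2\|\hat f\|_\infty$, and a symmetric-decreasing $L^2$ function with $\|\hat f\|_2 \le \lambda^{-1/2}$ need not be bounded a priori — so I would instead bootstrap: from the displayed identity $\hat f = \frac{2}{c_\lambda\lambda}K_w\hat f - (\text{const})$, and $\|K_w \hat f\|_\infty \le \|w\|_\infty \|\hat f\|_1$... but $\|\hat f\|_1$ is again only controlled through $H_\lambda$ poorly. The clean route is: $\|K_w\hat f\|_\infty \le \|w\|_2 \|\hat f\|_2 \le \|\hat f\|_2 \le \lambda^{-1/2}$ (using $\|w\|_2 \le \|w\|_1^{1/2}\|w\|_\infty^{1/2} = 1$), hence $\|\hat f\|_\infty \le \frac{2}{c_\lambda \lambda}\lambda^{-1/2} + \frac{1}{\lambda^2}|\int_a^b \hat f|$, and the last term is itself $\le \lambda^{-1/2}\cdot(\text{small})$ via $|\int \hat f| \le \lambda^{1/2}\|\hat f\|_{H_\lambda} \le \lambda^{1/2}$... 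I will need to assemble these pieces to land exactly on $\frac{4}{c_\lambda\lambda^2}$, choosing the estimates so the constants match.

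The main obstacle is precisely this constant-chasing at the end: getting $\|\hat f'\|_2 \le \frac{4}{c_\lambda\lambda^2}$ on the nose rather than up to an absolute constant. The cleanest implementation likely avoids the distributional-derivative detour altogether: differentiate the Euler–Lagrange identity in the interior to get $\hat f'(x) = \frac{2}{c_\lambda \lambda}(w*\hat f)'(x)$ pointwise, then estimate $(w*\hat f)' = w * \hat f'$ is circular, so instead write $(w*\hat f)'(x) = \int w'(x-y)\hat f(y)\,dy$ only when $w$ is differentiable — since the stated weights $\chi_{[-1/2,1/2]}$ and $e^{-\pi x^2}$ behave differently here, I expect the paper handles $w$ via the representation $(w*\hat f)' = w * \nu$ where $\nu = \hat f'\,\mathrm{d}x + \hat f(a^+)\delta_a - \hat f(b^-)\delta_b$ and then bounds $\|w*\nu\|_2 \le \|w\|_1\|\nu\|_{\mathcal M}$ with $\|\nu\|_{\mathcal M} = \|\hat f'\|_{L^1[a,b]} + 2\hat f(a^+)$. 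Combining $\|\hat f'\|_{L^1} \le \sqrt{b-a}\,\|\hat f'\|_{L^2}$ with the already-derived support bound $b - a \le 4\|w\|_1^2/c_\lambda^2 = 4/c_\lambda^2$ would close a self-improving inequality for $\|\hat f'\|_2$, and feeding in $\hat f(a^+) \le \|\hat f\|_\infty \le 2\lambda^{-3/2}/c_\lambda$ (from the $\infty$-bound above, dropping lower-order terms) should produce the claimed $\frac{4}{c_\lambda\lambda^2}$. I would present this as: (i) $\|\hat f\|_2 \le \lambda^{-1/2}$; (ii) derive Euler–Lagrange and identify the multiplier as $c_\lambda$; (iii) deduce the $L^\infty$ bound; (iv) differentiate, apply Young, and solve the resulting inequality for $\|\hat f'\|_2$.
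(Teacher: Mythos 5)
Your first half (the $\|\hat f\|_2 \le \lambda^{-1/2}$ bound and the derivation of the Euler--Lagrange identity $\hat f' = \tfrac{2}{c_\lambda\lambda}(w*\hat f)'$ on the support) matches the paper. But the second half of your argument has a genuine gap: you correctly observe that you want to apply Young's inequality to $(w*\hat f)'$, and you briefly consider writing $(w*\hat f)' = w'*\hat f$, but you then discard this route on the grounds that $w = \chi_{[-1/2,1/2]}$ is not differentiable. That dismissal is the mistake. The paper's entire proof of the derivative bound is precisely this one-line Young estimate with the derivative placed on $w$, interpreted in the total variation sense:
\[
\|\hat f'\|_2 \;\le\; \frac{2}{c_\lambda\lambda}\,\|w'*\hat f\|_2 \;\le\; \frac{2}{c_\lambda\lambda}\,\|w'\|_1\,\|\hat f\|_2 \;=\; \frac{2}{c_\lambda\lambda}\,\|w\|_{TV}\,\|\hat f\|_2,
\]
and since $w$ is symmetric decreasing with $\|w\|_\infty = 1$, one has $\|w\|_{TV} = 2\|w\|_\infty = 2$, giving $\|\hat f'\|_2 \le \tfrac{4}{c_\lambda\lambda}\|\hat f\|_2 \le \tfrac{4}{c_\lambda\lambda^{3/2}}$. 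This holds for $\chi_{[-1/2,1/2]}$ exactly as for $e^{-\pi x^2}$: $w'$ is a finite signed measure and $\|w'*\hat f\|_2 \le \|w'\|_{\mathcal M}\|\hat f\|_2$ is valid Young's inequality for a measure convolved with an $L^2$ function. Because you miss this, you instead pursue a much heavier plan --- moving the derivative onto $\hat f$, worrying about boundary Dirac masses, bootstrapping an $L^\infty$ bound, invoking a support estimate, and trying to close a self-improving inequality --- and you never actually close it: your write-up trails off with ``I would need to assemble these pieces'' and ``should produce the claimed bound.'' That assembly is not carried out, and it is unnecessary; the clean route requires none of the $L^\infty$, total-variation-of-$\hat f$, or support-diameter machinery. (Separately, note the paper's own displayed conclusion is $\tfrac{4}{c_\lambda\lambda^{3/2}}$, which is what the argument above yields, rather than the $\tfrac{4}{c_\lambda\lambda^2}$ in the lemma statement; your constant-chasing toward $\lambda^{-2}$ was aimed at a target the proof does not actually hit.)
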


%\textbf{Creo que lo que obtenemos es:}
%$$
%\frac{8}{\lambda^{5/2}c^2_{\lambda}}?
%$$

\begin{proof}
    The fact that $\|\hat f\|_{2} \le \lambda^{-1/2}$ follows from the fact that $\|\hat f\|_{2} \le \lambda^{-1/2}\|\hat f\|_{H_{\lambda}}$.

For the second part, observe that the function $\hat f$ maximizes the functional
	\begin{equation*}
		\mathcal F (g) :=  \log \langle g|k_w|g\rangle   -  \log \|g\|^2_{H_{\lambda}([a,b])}
	\end{equation*}
	over the set of functions  $g\in H_{\lambda}([a,b])$. This leads to the following Euler-Lagrange equation inside of the support of $\hat f$:

	\begin{equation}\label{euler-lagrange2}
		0 = \nabla_f \mathcal F (\hat f) = \frac{2 k_w \hat f}{\langle \hat f|k_w|\hat f\rangle}  - \frac {2\lambda \hat f+2\lambda^{-1}I \hat f} {\|\hat f\|_{H_{\lambda}([a,b])}} ,
	\end{equation}
	where $I$ is the operator that maps $f$ to a function with value equal to the integral of $f$.
	Then, multiplying by $\|\hat f\|_{H_{\lambda}([a,b])}$, and using the definition of $c_{\lambda}$, we obtain
		\begin{equation*}
		0 = \nabla_f \mathcal F (\hat f) = \frac{4 k_w\hat f}{c_{\lambda}}  -  {2\lambda \hat f-2\lambda^{-1}I\hat f}.
	\end{equation*}
		Therefore, since $\hat f\geq0$ we have that
	\begin{equation}\label{euler lagrange eq for lambda problem}
	\hat f=\max\{0,[c^{-1}_{\lambda}(\lambda+\lambda^{-1}I)^{-1}2k_w]\hat f\}.
	\end{equation}
    This implies that on the support of $f$, it holds that 
$$|\hat f ' |\leq  \frac{2}{c_\lambda \lambda} |(w \ast \hat f)'|$$
then, by Young's convolution inequality
$$
    \|\hat f '\|_2\le \frac{2}{c_\lambda \lambda} \|w'\|_{1}\|\hat f\|_2=\frac{2}{c_\lambda \lambda} \|w\|_{TV} \|\hat f\|_2  = \frac{4}{c_\lambda \lambda} \|w\|_{\infty} \|\hat f\|_2 \le \frac{4}{c_\lambda \lambda^{3/2}}.
    $$
\end{proof}

We are now ready to state the main result of the section:

\begin{proposition}\label{bounding the error}
    Let $c_{\lambda,\delta}:= \sup_{f\ge 0, f\in V_\delta,\|f\|_{H_{\lambda}}\le 1} \langle f |K|f\rangle$ be the discretized version of $c_{\lambda}$. Then:

    \begin{align}
0 \le c_\lambda-c_{\lambda,\delta} \le& \frac{16\delta^2}{\pi^2c_{\lambda}\lambda^2}
\end{align}

\begin{proof}
By construction, $c_\lambda\ge c_{\lambda,\delta}$, so we can focus on the second inequality. Let $f^*$ be an extremizer for $c_\lambda$, we assume without loss of generality that $\|f^*\|_{H_{\lambda}}=1$.
Then, by \eqref{relacion entre normas  en H} we have $\|[f^*]_{\delta}\|_{H_{\lambda}}\leq 1$, moreover
$$
c_{\lambda}-c_{\lambda,\delta}\leq K_w(f^*,f^*) - \frac{K_w([f^*]_\delta, [f^*]_\delta)}{\|[f^*]_{\delta}\|^2_{H_{\lambda}}}\leq K_w(f^*,f^*) - K_w([f^*]_\delta, [f^*]_\delta).
$$
Thus, it suffices to bound 
$$
K_w(f^*,f^*) - K_w([f^*]_\delta, [f^*]_\delta) = K_w([f^*]_\delta + f^*, \{f^*\}_\delta)  = (([f^*]_\delta + f^*)\ast w, \{f^*\}_{\delta}).
$$
In order to do this, we use two key properties: 
\begin{itemize}
    %\item (Free $\delta$ improving) The map $f\mapsto[f]_{\delta}$ (and therefore $\{\cdot\}_{\delta}$ )is an $L^2$ (and an $H_\lambda$) orthogonal projection. In particular $(g,\{f\}_{\delta}) = (\{g\}_{\delta}, \{f\}_{\delta})$.
    \item (Orthogonality) $([f]_{\delta},\{g\}_{\delta})=0$ for any two functions $f,g\in L^{1}(\R)$.
    \item (Optimal Poincare's inequality) $\|\{f\}_{\delta}\|_{L^2}\le \frac \delta {\pi} \|f'\|_{L^2} $.
\end{itemize}

%By the first estimate,
Using the orthogonality property and Young's convolution inequality we obtain
$$
c_\lambda-c_{\lambda,\delta} \le \|\{(f^* +[f^*]_\delta)\ast w\}_\delta\|_2 \|\{f^*\}_{\delta}\|_2.
$$
Moreover, by the optimal Poincare's inequality and Young's convolution inequality
$$
\|\{(f^* +[f^*]_\delta)\ast w\}_\delta\|_2\leq 
\frac{\delta}{\pi} \|(f^* +[f^*]_\delta)*w'\|_2
\leq \frac{\delta}{\pi}\|(f^* +[f^*]_\delta\|_2\|w\|_{TV}.
$$
Therefore, once again, by the optimal Poincaré inequality %(together with the fact that $\|(f^*\ast w)'\|_2 \le \|f^*\|_2\|w\|_{TV}$),
$$
c_\lambda-c_{\lambda,\delta} \le \frac{\delta^2}{\pi^2}\|f^* +[f^*]_\delta\|_2\| w\|_{TV} \|{f^*}'\|_2
$$

The proposition now follows using that $\|[f^*]_{\delta}\|_2\leq \|f^*\|_{2} \le \lambda^{-1/2}$, $\|\hat {f^*}'\|_{2} \le \frac{4}{c_\lambda \lambda^{3/2}}$, $\|w\|_{TV}=2\|w\|_{\infty}=2$, and the triangle inequality, to obtain:

\begin{equation}
    c_\lambda-c_{\lambda,\delta} \le \frac{16\delta^2}{\pi^2c_{\lambda}\lambda^2}.
\end{equation}

\end{proof}

\end{proposition}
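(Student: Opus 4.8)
The plan is to prove the (nontrivial) upper bound by testing the discretized problem against the piecewise-constant projection $[f^*]_\delta$ of an extremizer $f^*$ of the continuous problem $c_\lambda$, and then to control the resulting defect using the regularity of $f^*$ coming from Lemma \ref{l-infinity bound for the derivative}. The lower bound $c_\lambda - c_{\lambda,\delta} \ge 0$ is immediate since $V_\delta$-functions form a subclass of the competitors for $c_\lambda$.

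First I would fix an extremizer $f^*$ with $\|f^*\|_{H_\lambda([a,b])} = 1$; by Lemma \ref{relation between norms in H lambda and L12} (Riesz rearrangement) it may be chosen nonnegative and symmetric decreasing. Since $f^* \ge 0$, its local averages are nonnegative, so $[f^*]_\delta \in V_\delta$ is an admissible competitor for $c_{\lambda,\delta}$, and by \eqref{relacion entre normas en H} it satisfies $\|[f^*]_\delta\|_{H_\lambda} \le 1$. Hence $c_{\lambda,\delta} \ge \langle [f^*]_\delta | K_w | [f^*]_\delta \rangle$, which reduces matters to estimating the bilinear defect $K_w(f^*, f^*) - K_w([f^*]_\delta, [f^*]_\delta)$.

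Next I would use bilinearity and the symmetry of $w$ to rewrite this defect as $K_w(f^* + [f^*]_\delta,\, \{f^*\}_\delta) = \langle (f^* + [f^*]_\delta) \ast w,\, \{f^*\}_\delta \rangle$. The orthogonality relation $([g]_\delta, \{h\}_\delta) = 0$ allows replacing the first factor by its fluctuation $\{(f^*+[f^*]_\delta)\ast w\}_\delta$, and Cauchy–Schwarz then bounds the defect by $\|\{(f^*+[f^*]_\delta)\ast w\}_\delta\|_2 \, \|\{f^*\}_\delta\|_2$. Applying the optimal Poincaré inequality $\|\{g\}_\delta\|_2 \le \tfrac{\delta}{\pi}\|g'\|_2$ to each factor --- for the first factor after moving the derivative onto $w$ and using Young's convolution inequality together with $\|w'\|_1 = \|w\|_{TV} = 2\|w\|_\infty = 2$ --- yields $c_\lambda - c_{\lambda,\delta} \le \tfrac{\delta^2}{\pi^2}\|f^* + [f^*]_\delta\|_2 \, \|w\|_{TV}\, \|(f^*)'\|_2$. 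Finishing is then just substituting $\|f^*\|_2 \le \lambda^{-1/2}$ (so $\|f^* + [f^*]_\delta\|_2 \le 2\lambda^{-1/2}$, since the projection $[\cdot]_\delta$ does not increase the $L^2$ norm) and $\|(f^*)'\|_2 \le \tfrac{4}{c_\lambda \lambda^{3/2}}$ from Lemma \ref{l-infinity bound for the derivative}, giving $\tfrac{\delta^2}{\pi^2}\cdot 2\lambda^{-1/2}\cdot 2 \cdot \tfrac{4}{c_\lambda\lambda^{3/2}} = \tfrac{16\delta^2}{\pi^2 c_\lambda \lambda^2}$.

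I expect the main obstacle to be justifying the regularity step cleanly: one needs the extremizer to be differentiable enough on the interior of its support for both $\|(f^*)'\|_2$ and the Poincaré inequality to apply, which rests on the Euler–Lagrange equation \eqref{euler lagrange eq for lambda problem} and the bootstrap in Lemma \ref{l-infinity bound for the derivative}; one must also be slightly careful about the jump of a symmetric decreasing extremizer at the edge of its support when invoking the derivative bound, and about the factor-of-$2$ bookkeeping between the two normalizations of $c_\lambda$ used above. The remaining manipulations with the operators $[\cdot]_\delta$ and $\{\cdot\}_\delta$ are routine.
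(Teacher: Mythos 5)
Your proposal follows the paper's proof step for step: nonnegativity of $[f^*]_\delta$, the $H_\lambda$-contraction of averaging, the bilinear rewriting of the defect as $K_w(f^*+[f^*]_\delta,\{f^*\}_\delta)$, orthogonality plus Cauchy–Schwarz, the double application of the optimal Poincaré inequality together with Young's inequality and $\|w\|_{TV}=2$, and the final substitution of the $L^2$ bounds on $f^*$ and $(f^*)'$. (If anything, your invocation of Cauchy–Schwarz for the bilinear pairing is a cleaner attribution than the paper's "Young's convolution inequality," and you rightly flag the factor-of-two bookkeeping in the competing definitions of $c_\lambda$, which the paper leaves implicit.)
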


%!TEX root = main.tex
%Observe that 

\subsection{Computational aspects I: Regularity with respect to \texorpdfstring{$\lambda$}{lambda}} The computational strategy will be to find the value $c_\lambda$ for a discrete subset of the possible $\lambda$, and prove regularity properties of $c_\lambda$ that guarantee that $C_{opt}= c_{\lambda^*}$ is not far from the maximum of the values of $c_{\lambda}$.

\begin{lemma}
The following properties hold:
\begin{itemize}
    \item[(i)] The Lipstchitz constant of $c_{\lambda}$ is at most $1$.
    \item[(ii)] If $C_{opt}=c_{\lambda^*}$, then
    $$
    c_{\lambda} \ge \frac{2 c_{\lambda^*} }{\lambda^{-1}\lambda^*+\lambda{\lambda^*}^{-1}}
    $$
\end{itemize}
\end{lemma}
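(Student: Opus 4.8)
The plan is to regard $c_\lambda$ as the pointwise maximum, over $f\ge 0$, of the explicit one–variable functions
\[
h_f(\lambda):=2\,\frac{\bra f w\ket f}{\lambda\braket f f+\lambda^{-1}\braket f 1\braket 1 f},
\]
and to study each $h_f$ by hand; here I use that the maximum in \eqref{c lambda definition} is attained at a nonnegative $f$ (Riesz rearrangement, exactly as in the reductions preceding Lemma~\ref{relation between norms in H lambda and L12}), so that $\braket f1=\|f\|_1$. Abbreviate $A=\braket ff=\|f\|_2^2$, $B=\|f\|_1^2$, $C=2\bra f w\ket f$; then $A,B,C>0$ for any nonzero $f\ge 0$, and $h_f(\lambda)=C/(\lambda A+\lambda^{-1}B)$ is maximized over $\lambda$ precisely at $\lambda=\sqrt{B/A}=\|f\|_1/\|f\|_2$.

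For (i): a pointwise supremum of $L$–Lipschitz functions is $L$–Lipschitz, so it suffices to bound $h_f'$ uniformly in $f$. Differentiating, $h_f'(\lambda)=-C(\lambda^2A-B)/(\lambda^2A+B)^2$. By Young's convolution inequality and $\|w\|_1=\|w\|_\infty=1$ (hence $\|w\|_2\le 1$), $\bra f w\ket f=\langle f,w\ast f\rangle\le\|f\|_1\|w\ast f\|_\infty\le\|f\|_1\|f\|_2=\sqrt{AB}$, so $C\le 2\sqrt{AB}$; writing $u=\lambda^2A$, $v=B$,
\[
|h_f'(\lambda)|\le\frac{2\sqrt{AB}\,|\lambda^2A-B|}{(\lambda^2A+B)^2}=\frac1\lambda\cdot\frac{2\sqrt{uv}\,|u-v|}{(u+v)^2}\le\frac1{2\lambda},
\]
the last step being the elementary fact $\max_{u,v>0}\tfrac{2\sqrt{uv}|u-v|}{(u+v)^2}=\tfrac12$ (attained at $u/v=3\pm 2\sqrt2$). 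Thus $c_\lambda$ is $\tfrac1{2\lambda_0}$–Lipschitz on each ray $[\lambda_0,\infty)$, and in particular $1$–Lipschitz for $\lambda\ge\tfrac12$ — which, by the a priori bound \eqref{upper bounds for c lambda depending on lambda} (forcing $C_{opt}=c_{\lambda^*}\le\min\{2\lambda^*,2\lambda^{*-1}\}$) together with the support estimate \eqref{eq:support_bound_coarse} for the extremizer (giving $\lambda^*=\|f^*\|_1/\|f^*\|_2\le 1/C_{opt}$), is the only range of $\lambda$ relevant for locating $C_{opt}$ and for the grid search.

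For (ii): by Lemma~\ref{relation between norms in H lambda and L12} the problem defining $c_{\lambda^*}$ has a nonnegative extremizer $f^*\ne 0$, normalized so that $\|f^*\|_{H_{\lambda^*}}^2=\lambda^*\|f^*\|_2^2+{\lambda^*}^{-1}\|f^*\|_1^2=1$; then $c_{\lambda^*}=2\bra{f^*}w\ket{f^*}=h_{f^*}(\lambda^*)$. On one hand $\max_{\lambda>0}h_{f^*}(\lambda)=\tfrac{2\bra{f^*}w\ket{f^*}}{2\|f^*\|_1\|f^*\|_2}=\tfrac{\bra{f^*}w\ket{f^*}}{\|f^*\|_1\|f^*\|_2}\le C_{opt}$ by the definition of $C_{opt}$; on the other hand this maximum is $\ge h_{f^*}(\lambda^*)=c_{\lambda^*}=C_{opt}$. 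Hence equality holds throughout, which forces $\lambda^*$ to be the $\lambda$ maximizing $h_{f^*}$, i.e.\ $\lambda^*=\|f^*\|_1/\|f^*\|_2$, equivalently $\|f^*\|_1^2=(\lambda^*)^2\|f^*\|_2^2$ and $\lambda^*\|f^*\|_2^2={\lambda^*}^{-1}\|f^*\|_1^2$. Using $f^*$ as a competitor for $c_\lambda$, substituting this identity, and using $2\bra{f^*}w\ket{f^*}=c_{\lambda^*}(\lambda^*\|f^*\|_2^2+{\lambda^*}^{-1}\|f^*\|_1^2)=2c_{\lambda^*}\lambda^*\|f^*\|_2^2$,
\[
c_\lambda\ \ge\ h_{f^*}(\lambda)\ =\ \frac{2c_{\lambda^*}\lambda^*\|f^*\|_2^2}{\lambda\|f^*\|_2^2+\lambda^{-1}(\lambda^*)^2\|f^*\|_2^2}\ =\ \frac{2c_{\lambda^*}\lambda^*}{\lambda+\lambda^{-1}(\lambda^*)^2}\ =\ \frac{2c_{\lambda^*}}{\lambda^{-1}\lambda^*+\lambda{\lambda^*}^{-1}},
\]
the asserted inequality.

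The step I expect to be delicate is pinning the Lipschitz constant in (i) to exactly $1$: what the argument above produces cleanly and in full generality is only that $c_\lambda$ is $\tfrac1{2\lambda_0}$–Lipschitz on $[\lambda_0,\infty)$, and promoting this to the constant $1$ requires first confining attention to $\lambda\ge\tfrac12$ (this is where \eqref{upper bounds for c lambda depending on lambda} and the extremizer support bounds are needed). Part (ii), by contrast, is a routine unwinding of the Euler--Lagrange/AM--GM optimality condition $\lambda^*=\|f^*\|_1/\|f^*\|_2$, and it is this bound — with its quadratic vanishing of $c_{\lambda^*}-c_\lambda$ near $\lambda=\lambda^*$ — that drives the fine localization of $C_{opt}$ in the numerics.
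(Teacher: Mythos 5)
Your part (ii) is essentially the paper's own argument: both of you test $c_\lambda$ against the extremizer $f^*$ of $c_{\lambda^*}$, use the AM--GM equality condition $\lambda^*=\|f^*\|_1/\|f^*\|_2$, and substitute. Your write-up is a cleaner derivation of that equality condition, but the idea is identical, so nothing to flag there.

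Your part (i) takes the same framework (pointwise supremum of $1$-variable functions $h_f$, bound $|h_f'|$ uniformly) but is genuinely sharper than the paper, and your closing caveat is the right one. The paper normalizes $\|g\|_1=1$, notes $\langle g|K_w|g\rangle\le 1$ and $\bigl|\tfrac{d}{d\lambda}(\lambda\|g\|_2^2+\lambda^{-1})^{-1}\bigr|\le 1$, and asserts the result; but multiplying these two bounds (with the factor $2$ in the definition of $c_\lambda$) only yields $|h_g'|\le 2$, not $\le 1$. Indeed the claimed global constant $1$ cannot hold: using concentrating competitors $g_\epsilon=\epsilon^{-1}\chi_{[0,\epsilon]}$ with $\epsilon\sim\lambda$ shows $c_\lambda = 2\lambda(1+o(1))$ as $\lambda\to 0^+$, so the Lipschitz constant approaches $2$ near the origin. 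Your computation, using the tighter numerator bound $2\langle f|K_w|f\rangle\le 2\sqrt{AB}$ (via $\|w\|_2\le 1$) and the elementary $\max_{u,v>0}\tfrac{2\sqrt{uv}|u-v|}{(u+v)^2}=\tfrac12$ (i.e.\ $(s^2-2s-1)^2\ge 0$), gives the correct local bound $|h_f'(\lambda)|\le\tfrac1{2\lambda}$, hence $1$-Lipschitz only on $[\tfrac12,\infty)$. That restriction is necessary; the statement as printed in the paper overclaims by a factor of two, and the paper's proof does not close that gap. One small quibble: the support estimate \eqref{eq:support_bound_coarse} gives $\lambda^*\le 2/C_{opt}$, not $\le 1/C_{opt}$ as you wrote, but that does not affect the substance of the argument since the purpose is only to confine $\lambda^*$ to a bounded interval away from $0$, which it does.
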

\begin{proof}
%\newb{Clarify the proof of the first part of the lemma}
{\it{Proof of (i):}} Let $g$ be a function such that $\|g\|_1 = 1$, then $\langle g|K_w|g\rangle\le 1$, and 
$$
\left|\frac d {d\lambda} (\lambda \|g\|^2_2+ \lambda^{-1} \|g\|^2_1)^{-1}\right| = \lambda^ {-1}(\lambda \|g\|^2_2+ \lambda^{-1} \|g\|^2_1)^{-2} |\lambda \|g\|^2_2- \lambda^{-1}\|g\|^2_1|\le 1.
$$
The result follows from this.\\

{\it{Proof of (ii)}:} %For the second, 
We start observing that
$$
\lambda^{*}=\frac{\|f^*\|_1}{\|f^*\|_2},
$$
where $f^*$ is an extremizer for our original problem (This is when the equality happen in AM-GM inequality).
Then
\begin{align*}
 c_{\lambda}({\lambda^{*}}^{-1}\|f^*\|^2_1+\lambda^{*}\|f^*\|^2_{2})\geq 2\langle f^*|K_w|f^*\rangle= 2\|f^*\|_1\|f^*\|_2c_{\lambda^*}.
\end{align*}
Therefore
\begin{align*}
    c_{\lambda}\geq \frac{2\|f^*\|_1\|f^*\|_2c_{\lambda^*}}{{\lambda^{*}}^{-1}\|f^*\|^2_1+\lambda^{*}\|f^*\|^2_{2}}=\frac{2c_{\lambda^*} }{\lambda^{-1}\lambda^*+\lambda{\lambda^*}^{-1}}.
\end{align*}
\end{proof}

%\textbf{------------------}
\subsection{Computational aspects II: Discretizing the convolution kernel.}
Let $a,b\in \delta\mathbb{Z}$, and
as previously, let $V_\delta$ be the set of functions $g:[a,b)\to\mathbb{R}$ that are constants on intervals of length $\delta$.
Let $f:\delta \mathbb Z\cap [a,b)\to\mathbb{R}$, we define $\|f\|_{l^p_\delta}:= \left (\sum_{i\in\delta\mathbb Z\cap[a,b)} f(i)^p \delta\right )^{1/p}$ (essentially a $\delta-$discretization of the $L^p$ norm), and, we define $E[f]:=\sum_{i\in\delta\mathbb Z} f(i)\chi_{[i,i+\delta]}\in V_{\delta}$. Observe that we have the equality $\|f\|_{l^p_\delta} = \|E[f]\|_{L^p}$. For any $a,b\in \delta \mathbb Z$ this defines a natural isomorphism between $V_{\delta}$ and functions with domain $\delta \mathbb Z \cap [a,b)$. Let
$$
\iota :l^2(\delta \mathbb Z \cap [a,b)) \to L^2(\mathbb R \cap [a,b))\cap V_{\delta}
$$ be the map given by this isomorphism i.e $if=E[f]$ for all $f\in \delta\mathbb{Z}\cap[a,b)$ 
(Here we are considering $\delta \mathbb Z$ with $\delta$ times the counting measure as a measure). This isomorphism allows us to define the metrics induced in $V_{\delta}$ by  %$L^{1:2}$, 
$H_\lambda$, $B_\lambda$ as a metric for functions in $\delta \mathbb Z \cap [a,b)$. %Let $$\iota :l^2(\delta \mathbb Z \cap [a,b)) \to L^2(\mathbb R \cap [a,b))\cap V_{\delta}$$ be the map given by this isomorphism.

Given a function $w \in L^1 \cap L^{\infty} (
[a,b])$ there is a function in $\tilde w \in l^1 \cap l^{\infty} (\delta \mathbb Z \cap [a,b])$ such that for $f,g \in l^2(\delta \mathbb Z \cap [a,b))$ it holds that $$\langle  \iota f, w\ast_{\mathbb R} \iota g \rangle_{L^2(\mathbb R)} =  \langle  f, \tilde w\ast_{\delta \mathbb Z}  g \rangle_{l^2(\delta \mathbb Z)}. $$

The function $\tilde w$ is given explicitly by:

\begin{align*}
\tilde w(s) 
=&
\langle \delta^{-1} 1_s, \tilde w\ast_{\delta \mathbb Z} (\delta^{-1} 1_{0}) \rangle_{l^2(\delta \mathbb Z)} 
\\=&
\langle \delta^{-1} \chi_{[s,s+\delta)},  w\ast_{ \mathbb R} (\delta^{-1} \chi_{[0,\delta)}) \rangle_{L^2(\mathbb R)} 
%\\=&
%\langle \delta^{-1} 1_s, \tilde w\ast_{\delta \mathbb Z} (\delta^{-1} 1_{0}) \rangle_{l^2(\delta \mathbb Z)} 
\\=&
\fint_{s}^{s+\delta}\fint_{0}^{\delta} w(y-x)  dx dy
\\=&
\delta^{-2} \int_{s}^{s+\delta} w(t) (\delta-|t-s|) dt.
\end{align*}
In the two cases of special interest (when $w$ is a Gaussian function or when $w$ is the characteristic function of a set, we get more explicit values). In practice, however, the high stability of the integrals make it more accurate to perform the integrals numerically if $\delta$ is small than to compute the difference numerically (of the order of $10^{-5}$), for more details see the annotated code.

\subsection{Computational aspects II: Solving the problem for a fixed \texorpdfstring{$\lambda$}{lambda} at a discretization scale \texorpdfstring{$\delta$}{delta} }

What remains to do now is to give an algorithm that allows us to solve the extended problem for a fixed value of $\lambda$. The key fact that we use is that, if we %were able to 
remove the constraint $f\ge 0$, then the solution could be readily found using the power method for self-adjoint finite dimensional linear operators (symmetric matrices). We focus first in the situation when the constraint $f\ge 0$ is removed, and then argue that we can assume we are in that situation. \\

Recalling that
$$
C_{opt}(w) = \max_{\lambda>0} \max_{f \in L^1 \cap L^2([-a,a])}  2 \frac{\bra f w \ket f}{\lambda  \braket f f + \lambda^ {-1}\braket f 1 \braket 1 f}. 
$$
Assume that the functions under consideration are supported in $[-a,a)$, so we have $\braket 11 = 2a$. In an abuse of notation, we denote by $w$ the convolution operator associated to $w$.
The key observation is the following
$$
{\lambda \braket f f + \lambda^{-1} \braket f 1 \braket 1 f} = \bra f  (\sqrt \lambda Id + b_{\lambda} \ket 1 \bra 1 )^2 \ket f = \bra f A_{\lambda}^2 \ket f,
$$
where $b_{\lambda}$ is the unique positive solution to $\lambda^{-1} = 2\sqrt{\lambda} b_{\lambda}+2a b_{\lambda}^2$ and $A_{\lambda}$ is a hermitian positive definite matrix. Let $g:= A_{\lambda}f$, then we have:

$$
2 \frac{\bra f w \ket f}{\lambda  \braket f f + \lambda^ {-1}\braket f 1 \braket 1 f}  = 2 \frac{\bra g A_{\lambda}^{-1}  w A_{\lambda}^{-1}  \ket g}{\braket gg}
$$
defining $M_{\lambda}:= 2 A_{\lambda}^{-1}  w A_{\lambda}^{-1} $ we have that

$$
C_{opt}(w) = \max_{\lambda>0} \max \operatorname{Spec}(M_{\lambda}).
$$

%\subsection{Solving the auxiliary problem}

What remains to be shown is that the constraint $f\ge 0$ can indeed be dropped out. Let $f^*$ be the solution to the discretized problem constrained to $f\ge 0$. We can assume (by the discrete version of Riesz rearrangement inequality, see \cite[Chapter X]{HLP}) that $f^*$ is symmetric and non-increasing. Assume that, out of all potential extremizers to the discretized problem, $f^*$ has minimal support. \\

Let $\tilde V$ be the (finite dimensional) space of functions in $\delta \mathbb Z$ with the same support as $f^*$. Since $f^*$ extremizes the rayleigh quotient $\langle f|w|f\rangle$  it must be an eigenvector $w$ restricted to the support of $f$. It is the unique eigenvector: If there was another eigenvector $g^*$, the function $g^*-\alpha f^*$ would be an eigenvector as well, with a strictly smaller support if $\alpha$ is chosen appropriately.\\

Let $l$ be the number of $\delta$-intervals in the discretized shortest support. We can further constrain the optimization problem to functions supported in these $l$ intervals without changing the value of the optimization. In this case, the extremizer function $f^*$ will be an extremizer on the interior, and therefore, the largest eigenvalue of the bilinear form induced by $w$. We can find such eigenvalue by the power method.\\

This shows the following algorithm will give the extreme value up to discretization erros:

\begin{enumerate}
	\item Choose $\delta$ small enough for the desired error in Proposition \ref{bounding the error} to hold. Let $N$ be the number of $\delta-$intervals intersecting $[-a,a]$
	\item Fix $\lambda$ (and then repeat for a large enough set of $\lambda$ for the desired tolerance in Proposition \ref{bounding the error} to hold).
    \item For each natural $1\le k\le N$ solve the unconstrained discretized problem (the eigenvalue problem) with the power method. If the solution satisfies the constraints, keep as a potential problem.
    \item The maximum for our original problem (up to the tolerance/error arising from the discretization) is the maximum over all the solutions given in the previous step, where the maximum is taken over all the $\lambda$ and $k$.
\end{enumerate}

\subsection{A conjectured iterative method}

The Euler-Lagrange equations for $f$ in the support of $f$ can be written as:

	\begin{equation}\label{euler-lagrange-2}
		 \frac \optf {\|\optf\|_2^2} =  \max\left (\frac{2 \optf\ast w}{\int_{\mathbb R^2} \optf(x)\optf(y) w(x-y) dx dy}  - \frac 1 {\|\optf\|_1},0\right ).
	\end{equation}
This suggests a fixed point method (both in the discrete and continuous set-ups) to find an extremizer to the autoconvolution problem. This method converges in practice (see the last column in Table \ref{table}), and is hundreds of times faster than the method described in the previous section. We have however been unable to show convergence of such method. 

This method bears resemblance to the numerical method in \cite{CDGJLM}, where the authors were not able to show convergence of the method either.

\section{A comparison with the maximum problem}

At first glance this problem bears resemblance to the original maximum problem originally studied by Cilleruelo, Rusza and Vinuesa \cite{CRV}. The problem of the mean (when $w = \chi_{[-1/2,1/2]}$) was indeed posed by Steinberger as a relaxation of the problem of the maximum, namely finding the largest constant such that:

\begin{equation}
    \label{eq:maximo}
\text{max}_{-1/2 \le t \le 1/2} \int_{\R} f(t-x)f(x) \, dx \ge c_{\max} \left(\int_{-1/4}^{1/4} f(x) \, dx\right)^2.
\end{equation}

this problem was studied using computational methods by Cloninger and Steinberger \cite{CS}. The complexity of numerical method proposed in \cite{CS} grows exponentially in the level of discretization of the function, as opposed to the polynomial cost in the method proposed in this work.

One may hope that the methods in this work could generalize to methods in the maximum problem, however we believe that the problems are genuinely different for two reasons:

\begin{itemize}
    \item There is no reason to expect smooth (or Liptschitz) global extremizers to the maximum problem. In fact, the best known numerical extremizers \cite[Figure 1]{MV} seem non-smooth, and are certainly not symetrically decreasing. This prevents the gain of a $\delta^2$ in the discretization of the problem, which now has to be done much more carefully (\cite[Lemma 1]{CS}). 
    \item The (discretized) problem of the maximum seems to have multiple local extremizers of the functional in equation \eqref{eq:maximo}. This quite likely prevents fixed point methods to be efficient.  
\end{itemize}

\section{A remark on another autoconvolution problem}

For any $f\in L^{1}(\R)\cap L^2(\R)$ the following inequality hods: 
\begin{equation*}
     \|f\ast f\|_{2} \le 
     \|f\ast f\|_{1}
     \|f\ast f\|_{\infty}.
\end{equation*}
Motivated by the regularizing effect of the autoconvolutions it was conjectured by Martin and O’Bryant [\cite{MOB3}, Conjecture 5.2.] that there is a
universal constant $0<c<1$ such that for all nonnegative $f\in L^{1}(\R)\cap L^2(\R)$ the following improved inequality hods: 
\begin{equation*}\label{ineq thm 2}
     \|f\ast f\|_{2} \le c
     \|f\ast f\|_{1}
     \|f\ast f\|_{\infty}.
\end{equation*}
In fact, they proposed $c=\frac{\log 16}{\pi}\sim 1-0.1174$. This was disproved by Matolcsi and Vinuesa \cite{MV}, they observed that a necessary condition is $c\geq 1-0.1107$.

In this section we show that 

\begin{theorem}\label{otro teorema de autoconvolutions}
The best constant $c$ such that for any $f\in L^{1}(\R)\cap L^2(\R)$ the inequality 
\begin{equation}
    \|f\ast f\|_{2} \le c
     \|f\ast f\|_{1}
     \|f\ast f\|_{\infty}
\end{equation}
holds is $1$.
\end{theorem}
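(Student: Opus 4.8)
The inequality with $c=1$ is immediate from $\int|g|^{2}\le\|g\|_\infty\int|g|$ applied to $g=f*f$ (and it is always strict, $f*f$ being a nonzero continuous function), so the entire content is that $c=1$ cannot be lowered; equivalently, the plan is to produce $f\in L^{1}(\R)\cap L^{2}(\R)$ for which $\|f*f\|_{2}^{2}/(\|f*f\|_{1}\|f*f\|_\infty)$ is arbitrarily close to $1$. Since near-equality in the Hölder bound forces $g=f*f$ to be nearly two-valued, I will arrange $g\approx\chi_{[-T,T]}$ in $L^{1}$, $L^{2}$ and $L^{\infty}$ simultaneously. Two remarks guide the construction: $g$ cannot equal $\chi_{[-T,T]}$ (autoconvolutions of $L^{2}$ functions are continuous), and one cannot even approximate $\chi_{[-T,T]}$ while keeping $\widehat f$ real, since then $g$ would be positive definite with $\widehat g\ge0$, incompatible with $\widehat g$ being $L^{2}$-close to the sign-changing function $\widehat{\chi_{[-T,T]}}$. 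So the extremizing sequence must be genuinely complex.

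The construction I have in mind: fix $T>0$, let $m_\Omega$ be the $L^{1}$-normalized Fej\'er kernel at scale $1/\Omega$, so $m_\Omega\ge0$, $\int m_\Omega=1$ and $\widehat{m_\Omega}(\xi)=\max(0,1-|\xi|/\Omega)$ is continuous and supported in $[-\Omega,\Omega]$; set $g_\Omega:=\chi_{[-T,T]}*m_\Omega$. Then $0\le g_\Omega\le1$, $g_\Omega\to\chi_{[-T,T]}$ in $L^{1}$ and in $L^{2}$, and $\|g_\Omega\|_\infty\to1$ (because $1\ge\|g_\Omega\|_\infty\ge g_\Omega(0)=\int_{-T}^{T}m_\Omega\to1$) as $\Omega\to\infty$. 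The function $\widehat{g_\Omega}=\widehat{\chi_{[-T,T]}}\cdot\widehat{m_\Omega}$ is real, continuous, compactly supported, and sign-changing; taking $\Omega\in\frac1{2T}\mathbb{Z}$ makes it vanish to second order at $\pm\Omega$, so the only obstructions to a smooth square root are the finitely many simple interior zeros $\xi=k/(2T)$ (where square-root–type kinks appear) together with a Lipschitz corner at $\xi=0$. I then let $h_\Omega$ be the continuous Hermitian branch of $\sqrt{\widehat{g_\Omega}}$, namely $\operatorname{Re}h_\Omega=\sqrt{(\widehat{g_\Omega})^{+}}$ and $\operatorname{Im}h_\Omega(\xi)=\operatorname{sgn}(\xi)\sqrt{(\widehat{g_\Omega})^{-}}$. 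This $h_\Omega$ is continuous (both $(\widehat{g_\Omega})^{+}$ and $(\widehat{g_\Omega})^{-}$ vanish at each sign change), Hermitian, bounded and compactly supported, and one checks directly that $h_\Omega^{\,2}=\widehat{g_\Omega}$ pointwise.

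To finish, let $f_\Omega$ be the inverse Fourier transform of $h_\Omega$. Being bounded with compact support, $f_\Omega\in L^{2}\cap L^\infty$ and is real; and since a $|\xi-\xi_0|^{1/2}$ singularity of a compactly supported function contributes only an $O(|x|^{-3/2})$ tail to the inverse transform (a Lipschitz corner contributes even less), $f_\Omega=O(|x|^{-3/2})$ and hence $f_\Omega\in L^{1}(\R)$; thus $f_\Omega\in L^{1}(\R)\cap L^{2}(\R)$. As $f_\Omega\in L^{1}$, $\widehat{f_\Omega*f_\Omega}=h_\Omega^{\,2}=\widehat{g_\Omega}$, and two $L^{1}$ functions with the same transform agree, so $f_\Omega*f_\Omega=g_\Omega$. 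Therefore
\[
\frac{\|f_\Omega*f_\Omega\|_{2}^{2}}{\|f_\Omega*f_\Omega\|_{1}\,\|f_\Omega*f_\Omega\|_\infty}
=\frac{\|g_\Omega\|_{2}^{2}}{\|g_\Omega\|_{1}\,\|g_\Omega\|_\infty}
\ \xrightarrow[\ \Omega\to\infty\ ]{}\ \frac{2T}{2T\cdot1}=1 ,
\]
which, together with the trivial bound $c\le1$, proves the theorem.

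The step I expect to require the most care is the claim $f_\Omega\in L^{1}(\R)$: the sign changes of $\widehat{g_\Omega}$ are unavoidable (any $L^{2}$-approximation to $\chi_{[-T,T]}$ has sign-changing transform), and they force $h_\Omega$ to be only $C^{1/2}$ at those points, so one must confirm that these finitely many mild singularities still yield an integrable function. I would do this by localizing $h_\Omega$ near each zero, writing it as $|\xi-\xi_0|^{1/2}$ times a smooth cutoff and invoking the known Fourier transform of $|\xi|^{1/2}$, or by a one-dimensional van der Corput / integration-by-parts estimate on each lobe; the remaining points (continuity of the square-root branch, the approximate-identity limits, and the norm bookkeeping) are routine.
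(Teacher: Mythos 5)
Your proof is correct, and it takes a genuinely different route from the paper's at the one technical pinch‑point, which is worth explaining. Both arguments start from the same observation: mollify $\chi_{[-T,T]}$ to a compactly band‑limited, nonnegative $g$ whose Hölder ratio tends to $1$, and then realize $g$ (or something close to it) as an autoconvolution by taking a Hermitian square root of $\hat g$ on the Fourier side; the obstruction in both cases is that $\hat g$ changes sign, so a real square root does not exist and a complex one is forced to be non‑smooth at the zeros. The paper deals with this by adding a small \emph{imaginary} bump to $\hat g$ localized at each zero (the $i\delta\sum_j\hat\eta(\delta^{-1}(w\mp z_j))$ term), which makes the perturbed transform non‑vanishing on the interior of its support; after this perturbation the square root can be taken to be $\mathcal C^{50}$ (their Proposition \ref{can-take-sqrt}), so $f\in L^1$ is automatic, at the cost of an extra parameter $\delta$, an extra approximation step $\|f*f-g\|_*<\epsilon$, and two preparatory propositions. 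You instead keep $g_\Omega$ fixed and take the continuous Hermitian branch $\operatorname{Re}h_\Omega=\sqrt{(\hat g_\Omega)^+}$, $\operatorname{Im}h_\Omega=\operatorname{sgn}(\xi)\sqrt{(\hat g_\Omega)^-}$, accepting that $h_\Omega$ is only Hölder $1/2$ at the finitely many interior zeros; the extra work lands in showing $f_\Omega\in L^1$, which you handle by the localized Fourier decay of a $|\xi-\xi_0|^{1/2}$ singularity ($O(|x|^{-3/2})$) and of the Lipschitz corners at $0,\pm\Omega$ ($O(|x|^{-2})$). This trades the paper's algebraic perturbation for a mild stationary‑phase‑type decay estimate; your version gives $f_\Omega*f_\Omega=g_\Omega$ \emph{exactly}, eliminating one layer of approximation, and does not need the carefully engineered $\hat\eta(1-x)=cx^n(1+o(1))$ vanishing from the paper's construction. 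Both are correct; yours is arguably cleaner once the $|x|^{-3/2}$ tail estimate is written out, and you have correctly flagged that estimate as the step requiring the most care.

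One small bookkeeping point: what is used is that the $\operatorname{sgn}(\xi)$ factor in $\operatorname{Im}h_\Omega$ is harmless at $\xi=0$ because $\hat g_\Omega>0$ near $0$ (so the imaginary part vanishes identically there); you did note the Lipschitz corner at $0$ (coming from $\hat m_\Omega$), which is the right thing to track. Also, the Hermitian symmetry $h_\Omega(-\xi)=\overline{h_\Omega(\xi)}$ checks out since $\hat g_\Omega$ is real and even, so $f_\Omega$ is indeed real‑valued. No gaps.
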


We start with two results about cut-off functions in $\mathbb R$.

\begin{prop}
There exists a smooth real function $\eta \in L^1(\mathbb R) \cap L^\infty(\mathbb R)$ and some $n>100$ such that the following hold:

\begin{itemize}
	\item $\eta$ is symmetric nonnegative
	\item $\hat \eta$ is $\mathcal C^{n-1}$, symmetric nonnegative, supported on $[-1,1]$ and positive on $(-1,1)$
	\item $\|\eta\|_1=1$
	\item $\hat\eta(1-x) = c x^n(1+o(1))$, for $x \in [0,\frac 1 {10}]$, where the $o(1)$ term is smooth.
\end{itemize}
\end{prop}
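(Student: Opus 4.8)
The plan is to realize $\widehat\eta$ as the self-convolution $\psi\ast\psi$ of an explicit, compactly supported bump; this makes $\eta$ the square of the (real) Fourier transform of $\psi$, hence automatically nonnegative, and lets one read off every remaining property from closed-form expressions. Throughout, the Fourier transform is normalized by $\widehat f(\xi)=\int_{\mathbb R} f(x)e^{-2\pi i x\xi}\,dx$, so that $\widehat{\widehat f}=f(-\,\cdot\,)$ and $\widehat{fg}=\widehat f\ast\widehat g$ with no constants.

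First I would fix an integer $m\ge 50$, set $n:=2m+1>100$, and take
\[
\psi(t):=A\,(1/4-t^2)_+^{\,m},
\]
with $A>0$ chosen so that $\|\psi\|_{L^2}=1$. Then $\psi\ge 0$ is even, supported on $[-1/2,1/2]$ and positive on the interior, and $\psi\in\mathcal C^{m-1}(\mathbb R)$ (its derivatives up to order $m-1$ vanish at $\pm 1/2$). Set $g:=\widehat\psi$ and $\eta:=g^2$, so that $\widehat\eta=\widehat{g^2}=\widehat g\ast\widehat g=\psi\ast\psi$ (using $\widehat{\widehat\psi}=\psi(-\,\cdot\,)=\psi$). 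Since $\psi$ is real and even, $g$ is real and even, so $\eta=g^2\ge 0$ is real, even, and real-analytic by Paley--Wiener; because $\psi\in\mathcal C^{m-1}$ with $\psi^{(m)}\in L^1$ one has $|g(x)|\lesssim (1+|x|)^{-m}$, hence $|\eta(x)|\lesssim(1+|x|)^{-2m}$ and $\eta\in L^1(\mathbb R)\cap L^\infty(\mathbb R)$ since $2m\ge 100$. Finally $\|\eta\|_1=\int g^2=\|\psi\|_2^2=1$ by Plancherel.

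It remains to verify the properties of $\widehat\eta=\psi\ast\psi$. Nonnegativity, evenness and $\operatorname{supp}\widehat\eta\subset[-1,1]$ are immediate, and positivity on $(-1,1)$ holds because for $|\xi|<1$ the integrand of $\int_{\mathbb R}\psi(t)\psi(\xi-t)\,dt$ is positive on a nonempty open set. For the behaviour near $\xi=1$, substitute $t=1/2-u$ and then $u=\epsilon v$ in $(\psi\ast\psi)(1-\epsilon)=\int_{1/2-\epsilon}^{1/2}\psi(t)\psi(1-\epsilon-t)\,dt$, using $\psi(1/2-u)=A\,(u(1-u))^m$ for $u\in[0,1]$, to get
\[
(\psi\ast\psi)(1-\epsilon)=A^2\,\epsilon^{\,2m+1}\!\int_0^1 v^m(1-v)^m(1-\epsilon v)^m\bigl(1-\epsilon(1-v)\bigr)^m\,dv=:\epsilon^{\,n}P(\epsilon),
\]
where $P$ is a polynomial of degree $2m$ with $P(0)=A^2B(m+1,m+1)>0$. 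With $c:=P(0)$ this is exactly $\widehat\eta(1-x)=c\,x^n\bigl(1+(P(x)/c-1)\bigr)=c\,x^n(1+o(1))$ on $[0,1/10]$ with a smooth $o(1)$ term, and near $\xi=\pm1$ it shows $\widehat\eta$ coincides with $(1-|\xi|)_+^n$ times a polynomial, hence is $\mathcal C^{n-1}$ there.

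The one step that is more than a routine check is proving $\widehat\eta\in\mathcal C^{n-1}(\mathbb R)$ globally. On $(-1,0)$ and $(0,1)$, $\psi\ast\psi$ is a polynomial in $\xi$ (the integrand is polynomial in $(t,\xi)$ and the limits of integration are affine in $\xi$), and near $\pm1$ we have just dealt with it; the delicate point is $\xi=0$, where the singular supports of the two copies of $\psi$ overlap. Localizing there, the non-smooth part of $(\psi\ast\psi)(\xi)$ is, up to a smooth factor, $\int_0^{\delta_0}s^m(s+\xi)_+^m h(s)\,ds$ (from the corner $t=1/2$) plus its reflection (from $t=-1/2$), with $h$ smooth. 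One can differentiate in $\xi$ under the integral $m$ times with no boundary contribution, since each Leibniz boundary term carries a factor $(s+\xi)^{\ge 1}$ vanishing at the moving endpoint; after $m+1$ derivatives the term equals $m!\,h(-\xi)\,(-\xi)_+^m$, which is $\mathcal C^{m-1}$, so the term itself is $\mathcal C^{2m}=\mathcal C^{n-1}$. Combining the three regions gives $\widehat\eta\in\mathcal C^{n-1}(\mathbb R)$. I expect this endpoint bookkeeping at $\xi=0$ to be the only real obstacle; everything else is either immediate or the Beta-integral computation above.
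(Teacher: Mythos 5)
Your construction is exactly the paper's: the paper simply sets $\hat\eta = c\,f*f$ with $f=(\tfrac14-x^2)^m\chi_{[-1/2,1/2]}$ (your $\psi$ up to normalization) and leaves all verification to the reader. Your write-up supplies the missing checks—Plancherel for $\|\eta\|_1=1$, the Beta-integral computation giving $\hat\eta(1-x)=cx^n(1+o(1))$ with $n=2m+1$, and the $C^{2m}$ regularity analysis at the interior singular point $\xi=0$—and all of these are correct.
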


\begin{proof}
	Let $f = (\frac 1 4-x^2)^m \chi_{[-1/2,1/2]}$ for $m>100$. Let $\hat \eta = c f\ast f$, for $c$ that makes  $\|\eta\|_1=1$ hold.
\end{proof}

\begin{prop}
\label{can-take-sqrt}With $\eta$ as given in the previous lemma, if $\phi:\mathbb R\to \mathbb R$ is a smooth function such that $\phi(x) = \bar \phi(-x)$ for all $x\in\mathbb{R}$, $\phi(1)=1+o(1)$, and it does not vanish on $[-1,1]$, then $\phi \hat \eta$ admits a $\mathcal C^{50}$ (complex) square root on $\mathbb R$.
\end{prop}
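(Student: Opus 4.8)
The plan is to build the square root explicitly as a product and then to reduce the whole statement to a regularity check at the two endpoints $x=\pm1$, where $\hat\eta$ vanishes. Define $\psi:=\sqrt{\phi}\cdot\hat\eta^{1/2}$ on $(-1,1)$, using the non-negative square root of $\hat\eta$ (legitimate since $\hat\eta>0$ there, and $\phi$ has a smooth square root there, see below), and extend $\psi$ by $0$ on $\mathbb{R}\setminus[-1,1]$. Then $\psi^2=\phi\hat\eta$ everywhere (outside $[-1,1]$ both sides vanish, as $\hat\eta\equiv0$ there), $\operatorname{supp}\psi\subset[-1,1]$, and what remains is to show $\psi\in\mathcal C^{50}(\mathbb R)$.

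\textbf{Construction of $\sqrt\phi$ and reduction.} First I would note that $\phi$ is smooth and non-vanishing on the compact interval $[-1,1]$, hence non-vanishing on an open interval $I\supset[-1,1]$. Since $I$ is simply connected, $\phi$ admits a smooth (complex) logarithm $L$ on $I$: fix $x_0\in I$ and a branch of $\log\phi(x_0)$, set $L(x)=\log\phi(x_0)+\int_{x_0}^{x}\phi'(s)\phi(s)^{-1}\,ds$, and check $\phi=e^{L}$ on $I$. Then $\sqrt\phi:=e^{L/2}\in\mathcal C^\infty(I)$ with $(\sqrt\phi)^2=\phi$. On $(-1,1)$ the map $\hat\eta^{1/2}$ is $\mathcal C^{n-1}$ (composing the positive $\mathcal C^{n-1}$ function $\hat\eta$ with $t\mapsto\sqrt t$ on $(0,\infty)$), so $\psi\in\mathcal C^{n-1}((-1,1))$, and $\psi\in\mathcal C^\infty(\mathbb R\setminus[-1,1])$ trivially. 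By the symmetry $\hat\eta(-x)=\hat\eta(x)$ it therefore suffices to prove $\mathcal C^{50}$ regularity near $x=1$.

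\textbf{Regularity at the edge (the crux).} Here I would use the expansion $\hat\eta(1-t)=c\,t^{n}(1+r(t))$ on $t\in[0,\tfrac1{10}]$, with $r$ \emph{smooth} and $r(t)=o(1)$ as $t\to0$. Then $r(0)=0$, so $g:=1+r$ is smooth and $>0$ on some $[0,\epsilon)$ and $\sqrt g$ is smooth there; moreover $c>0$, since $c\ge0$ (the leading coefficient of a non-negative function) while $c=0$ would force $\hat\eta$ to vanish near $x=1$, contradicting $\hat\eta>0$ on $(-1,1)$. Hence, in terms of $x$,
\[
\psi(x)=\sqrt c\;\sqrt{\phi(x)}\;\sqrt{g(1-x)}\;(1-x)^{n/2}\qquad(1-\epsilon<x<1),
\]
while $\psi\equiv0$ on $\{x\ge1\}$. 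The factors $\sqrt{\phi(x)}$ and $\sqrt{g(1-x)}$ are $\mathcal C^\infty$ (one-sidedly) up to $x=1$, so by the Leibniz rule every left derivative of $\psi$ at $x=1$ of order $j$ is a finite sum of bounded quantities times $\tfrac{d^i}{dx^i}(1-x)^{n/2}$ with $i\le j$; and $\tfrac{d^i}{dx^i}(1-x)^{n/2}$ is a nonzero constant times $(1-x)^{n/2-i}$, which tends to $0$ as $x\to1^-$ whenever $n/2-i>0$. Since $n>100$, this holds for all $i\le50$, so the left derivatives of $\psi$ at $x=1$ up to order $50$ exist and vanish, matching the (vanishing) right derivatives; thus $\psi\in\mathcal C^{50}$ near $x=1$. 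The symmetric argument at $x=-1$, together with $\psi^2=\phi\hat\eta$, finishes the proof.

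\textbf{Where the obstacle lies.} Everything away from $\{\pm1\}$ is soft; the only real point is the endpoint analysis, where taking a square root halves the vanishing order, turning $\hat\eta\sim(1-x)^{n}$ into $\psi\sim(1-x)^{n/2}$, and such a power glued to the zero function is only $\mathcal C^{\lfloor n/2\rfloor}$. This is precisely why the preceding proposition is engineered with $n>100$ (so that $n/2>50$), and why the $o(1)$ term is required to be \emph{smooth}, so that $\sqrt g$ contributes full smoothness rather than degrading the count. The symmetry $\phi(x)=\bar\phi(-x)$ and the normalization $\phi(1)=1+o(1)$ are not needed for the existence of a square root; but if one also wants $\psi(x)=\overline{\psi(-x)}$ (useful downstream, to make an associated function real-valued), this can be arranged by choosing the branch of $\sqrt\phi$ compatibly with the Hermitian symmetry of $\phi$, since on the connected set $(-1,1)$ two square roots of $\phi\hat\eta$ differ only by a sign.
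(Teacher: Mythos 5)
Your proposal is correct and follows essentially the same route as the paper: reduce to local regularity at $x=\pm1$, then use the expansion $\hat\eta(1-x)=cx^n(1+o(1))$ with a smooth error so that the square root behaves like $(1-x)^{n/2}$ times a smooth factor, which is $\mathcal C^{50}$ because $n/2>50$. Your write-up supplies the details the paper leaves implicit (construction of $\sqrt\phi$ via a smooth logarithm, positivity of $c$, the Leibniz computation of one-sided derivatives, and the remark that $\phi(1)=1+o(1)$ is not actually needed for existence), but the underlying argument is the same.
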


\begin{proof}
By elementary complex analysis, we know there exists a unique $C^{0}_c$ complex square root  $f$ of $\phi \hat \eta $. Moreover it is smooth in $(-1,1)\cup [-1,1]^c$. It suffices to show that this square root is $\mathcal C^{50}$ near $1$. By hypothesis, $\phi = (1+o(1))$ near $1$, where the $o(1)$ is a smooth term. Therefore $\phi \hat \eta (1-x)$ is $cx^{n}(1+o(1))$, where the $o(1)$ is a smooth term, and therefore admits a $\mathcal C^{50}$ square root.
\end{proof}

\begin{proof}[Proof of Theorem \ref{otro teorema de autoconvolutions}]
Throughout the proof, let $\eta$ as given in the theorem above. Let $\eta_c(x) = c^{-1}\eta(c x)$. Let {$g = \chi_{[-1/2,1/2]}\ast \eta_c$} for $c$ large enough. 

We have that $|g|$ is a real-valued Schwartz function, and that 

\begin{equation}
    \|g\|_{2} \ge (1-\epsilon)
     \|g\|_{1}
     \|g\|_{\infty}
\end{equation}
for $\epsilon=\epsilon(c)$ as small as we want. We will show that there exists a test function $f$ such that the following inequality holds
$$
\|f\ast f- g\|_*<\epsilon
$$
for $*=1,2,\infty$. This will finish the proof. Note that we can  get $*=2$ by interpolation. Let $z_1,\dots z_n$ be the positive zeros of $\hat g$. Note that sincce $\hat g(-x) = \overline{\hat g}(x)$, the negative zeros are $-z_1,\dots -z_n$. Let $\delta$ sufficiently small, %to be chosen later, 
and let 
$$
\hat {\tilde g}(w):= \hat \eta_c \left(\hat \chi_{[-1/2,1/2]} +i\delta  \sum_{j=1}^n  \hat \eta(\delta^{-1}(w-z_j))-\hat \eta(\delta^{-1}(w+z_j))\right).
$$
Now we can take a square-root of $\hat {\tilde g}(w)$ by Proposition \ref{can-take-sqrt} and the result follows.
\end{proof}


\begin{thebibliography}{99}

\bibitem{BarnardSteinerberger} 
\newblock R.C. Barnard, S. Steinerberger,
\newblock \emph{Three convolution inequalities on the real line with connection to additive combinatorics.}
\newblock Journal of Number Theory, \textbf{207} (2020), 42--55. 

\bibitem{Beckner}
\newblock W. Beckner, 
\newblock \emph{Inequalities in Fourier analysis.} 
\newblock  Annals of Mathematics, \textbf{102} (1975), 159--182. 

\bibitem{BCK}
\newblock  J. Bourgain, L. Clozel and J.P. Kahane, 
\newblock \emph{Principe d’Heisenberg et fonctions positives.}
\newblock Annales de l’institut Fourier, \textbf{60} (2010), n. 4, 1215--1232.

\bibitem{CDGJLM}
 A. Chang, J. de Dios, R. Greenfeld, A. Jamneshan, Z. Li and J. Madrid,
 \newblock \emph{Decoupling for fractal subsets of the parabola.}
\newblock Preprint https://arxiv.org/abs/2012.11458

\bibitem{CJLL}
 E. Carlen, I. Jauslin, E. Lieb and M. Loss,
 \newblock \emph{On the Convolution Inequality $f\geq f*f$.}
\newblock To appear in IMRN https://doi.org/10.1093/imrn/rnaa350.

\bibitem{CRT}
\newblock J. Cilleruelo, I. Ruzsa and C. Trujillo,
\newblock \emph{Upper and lower bounds for finite $Bh[g]$ sequences.}
\newblock Journal of Number Theory, \textbf{97} (2002), n. 1, 26--34.

\bibitem{CRV} 
\newblock J. Cilleruelo, I. Ruzsa and C. Vinuesa,
\newblock \emph{Generalized Sidon sets.}
\newblock Advances in Mathematics, \textbf{225} (2010), n. 5, 2786 --2807.

\bibitem{CS}
\newblock A. Cloninger and S. Steinerberger, 
\newblock \emph{On suprema of autoconvolutions with an application to Sidon sets.}
\newblock Proceedings of the American Mathematical Society \textbf{145} (2017), no. 8, 3191--3200.

\bibitem{CG}
\newblock H. Cohn and F. Gon\c{c}alves,
\newblock \emph{An optimal uncertainty principle in twelve dimensions via modular forms.}
\newblock Inventionnes Mathematicae {\bf 217} (2019), no.~3, 799--831. 

\bibitem{Github}
\newblock  J. de Dios, J. Madrid, 
\newblock \emph{On classical inequalities for autocorrelations and autoconvolutions: accompaining code.}
\newblock Hosted on GitHub at \url{https://github.com/jaumededios/suprema-autocorrelations}.

\bibitem{FKM}
\newblock S. Fish, D. King and S. J. Miller,  
\newblock \emph{Extensions of Autocorrelation Inequalities with Applications to Additive Combinatorics.}
\newblock arXiv preprint \texttt{arXiv:2001.02326}. 

\bibitem{Folland}
\newblock G. Folland, 
\newblock \emph{A course in abstract harmonic analysis.} 
\newblock Chapman and Hall/CRC, 2016. 

%\bibitem{GOeSR2}
%\newblock F. Gon\c{c}alves, D. Oliveira e Silva, and J. P. G. Ramos, 
%\newblock \emph{New sign uncertainty principles.}
%\newblock Preprint, 2020.

%\bibitem{GOeSR1}
%\newblock F. Gon\c{c}alves, D. Oliveira e Silva, and J. P. G. Ramos, 
%\newblock \emph{On regularity and mass concentration phenomena for the sign uncertainty principle.}
%\newblock Preprint, 2020.

\bibitem{GOeSS}
\newblock F. Gon\c{c}alves, D. Oliveira e Silva, and S. Steinerberger,
\newblock \emph{ Hermite polynomials, linear flows on the torus, and an uncertainty principle for roots.} 
\newblock Journal of Mathematical Analysis and Applications {\bf 451} (2017), no.~2, 678--711. 

\bibitem{G}
\newblock B. Green, 
\newblock \emph{The number of squares and $Bh[g]$ sets.}
\newblock Acta Arithimetica, \textbf{100} (2001), 365--390.

\bibitem{HLP}
\newblock G.H. Hardy, J.E. Littlewood, and G. Polya,
\newblock \emph{Inequalities}
\newblock Cambridge University Press, Cambridge (1934).

\bibitem{Ha}
\newblock J. K. Haugland, 
\newblock \emph{Advances in the minimum overlap problem.}
\newblock Journal of Number Theory, \textbf{58} (2001), 71--78.

\bibitem{MOB1}
\newblock G. Martin and K. O'Bryant, 
\newblock \emph{Constructions of generalized Sidon sets.}
\newblock Journal of Combinatorial Theory Series A \textbf{113} (2006), n. 4, 591--607.

\bibitem{MOB2} 
\newblock G. Martin and K. O'Bryant,
\newblock \emph{The symmetric subset problem in continuous Ramsey theory.}
\newblock Experimental Mathematics \textbf{16} (2007), n. 2, 145--166.

\bibitem{MOB3}
\newblock G. Martin, K. O’Bryant,
\newblock \emph{The supremum of autoconvolutions, with applications to additive
number theory.}
\newblock Illinois J. Math. \textbf{53} (2009), no. 1, 219--235.

\bibitem{MR}
\newblock J. Madrid and J. P. G. Ramos, 
\newblock \emph{On optimal autocorrelation inequalities on the real line.}
\newblock Communications on Pure and Applied Analysis (2021), 20 (1) 369-388



\bibitem{MV} 
\newblock  M. Matolcsi and C. Vinuesa, 
\newblock \emph{Improved bounds on the supremum of autoconvolutions.} 
\newblock Journal of Mathematical Analysis and Applications \textbf{372} (2010), no. 2, 439--447.

\bibitem{Y}
\newblock  G. Yu, 
\newblock \emph{An upper bound for $B2[g]$ sets.}
\newblock Journal of Number Theory \textbf{122} (2007), n. 1, 211--220. 


\end{thebibliography}
\end{document}